\patchcmd{\section}{\scshape}{\bfseries}{}{}
\newcommand{\sectionbf}[1]{%
  \stepcounter{section}%
  \section*{\thesection.~#1}%
}
\numberwithin{equation}{section}
\theoremstyle{plain}
\newtheorem{theorem}{Theorem}
\newtheorem{lemma}{Lemma}[section]
\newtheorem{proposition}[lemma]{Proposition}
\newtheorem*{openproblem}{Open problem}
\newtheorem*{theorem-nonumber}{Theorem}
\theoremstyle{definition}
\newtheorem*{definition}{Definition}
\newcommand{\area}{A}
\newcommand{\cl}[1]{\overline{#1}}
\newcommand{\defeq}{\mathrel{\mathop:}=}
\newcommand{\dist}{d}
\newcommand{\geodflow}{\Phi}
\newcommand{\mathreal}{\mathbb{R}}
\newcommand{\mrealnneg}{\mathreal_{\ge 0}}
\newcommand{\mrealpos}{\mathreal_{>0}}
\newcommand{\mathinteger}{\mathbb{Z}}
\newcommand{\mhdf}{\mathcal{H}}
\newcommand{\mllb}{\nu}
\newcommand{\mlle}{\mu}
\newcommand{\nv}{\langle v,N\circ\sigma(v) \rangle}
\renewcommand{\setminus}{\smallsetminus}
\begin{document}
\title[Area growth and rigidity of surfaces]{Area growth and rigidity of surfaces without conjugate points}
\author[V.~Bangert \& P.~Emmerich]{Victor Bangert \& Patrick Emmerich}
\begin{abstract}
  We prove flatness of complete Riemannian planes and cylinders without conjugate points under optimal conditions on the area growth.
\end{abstract}
\maketitle

\sectionbf{Introduction}

\noindent{}In 1942 M. Morse and G.~A. Hedlund \cite{MH} conjectured that every Riemannian $2$-torus without conjugate points is flat. This was proved by E.~Hopf in 1943, see \cite{HOPF}. The natural question, if Riemannian tori without conjugate points and of arbitrary dimension are flat, was answered affirmatively by D. Burago and S. Ivanov \cite{BI}, by a completely new method.

Here, we apply E. Hopf's original method to the study of complete Riemannian planes and cylinders without conjugate points. In these cases one needs additional assumptions to prove flatness since the plane and the cylinder admit complete Riemannian metrics with non-positive curvature (and, hence, without conjugate points) that are not flat. In this situation, conditions on the area growth are particularly natural. For the case of the plane we prove the following optimal result.

\begin{theorem}\label{thm:plane}
  Let $g$ be a complete Riemannian metric without conjugate points on the plane $\mathreal^2$. Then, for every $p\in\mathreal^2$, the area $A_p(r)$ of the metric ball with center $p$ and radius $r$ satisfies
  \[
    \liminf_{r\to\infty}\frac{A_p(r)}{\pi r^2}\geq 1    
  \]
with equality if and only if $g$ is flat.
\end{theorem}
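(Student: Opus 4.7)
The plan is to reduce the area estimate to a bound on a weighted integral of the Gauss curvature and then control that integral using the stable Jacobi fields that exist by virtue of the absence of conjugate points.

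\emph{Step 1 (polar setup and Hopf-type identity).} Completeness, simple connectedness, and absence of conjugate points force $\exp_{p}\colon T_{p}\mathreal^{2}\to\mathreal^{2}$ to be a diffeomorphism, so geodesic polar coordinates based at $p$ are valid globally and every radial geodesic $\gamma_{\theta}(t):=\exp_{p}(tv_{\theta})$ is a minimizing ray with $d(p,\gamma_{\theta}(t))=t$. In these coordinates
$$A_{p}(r)=\int_{0}^{2\pi}\!\int_{0}^{r} a(t,\theta)\,dt\,d\theta,$$
where $a(\cdot,\theta)$ is the positive solution of the scalar Jacobi equation $a_{tt}+K(\gamma_{\theta})\,a=0$ with $a(0,\theta)=0$ and $a_{t}(0,\theta)=1$. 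Using $Ka=-a_{tt}$ and integrating by parts twice in $t$ against the weight $(r-t)^{2}$ yields the identity
$$A_{p}(r)=\pi r^{2}-\tfrac{1}{2}\int_{B_{p}(r)} K(x)\bigl(r-d(p,x)\bigr)^{2}dA(x),$$
so the desired inequality is equivalent to
$$\limsup_{r\to\infty}\frac{1}{r^{2}}\int_{B_{p}(r)} K\,(r-d)^{2}\,dA\leq 0.$$

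\emph{Step 2 (control by stable Jacobi fields).} Along every ray $\gamma_{\theta}$ the absence of conjugate points yields a positive stable Jacobi field $b_{+}(\cdot,\theta)$, normalized by $b_{+}(0,\theta)=1$. Its Riccati coefficient $u_{+}:=b_{+,t}/b_{+}$ satisfies $u_{+}'+u_{+}^{2}+K=0$ and is pointwise dominated by the radial Riccati $u_{a}:=a_{t}/a$; the Wronskian normalisation gives $a\,b_{+}\,(u_{a}-u_{+})\equiv 1$ along each ray. I would substitute $K=-u_{+}'-u_{+}^{2}$ into the curvature integral, integrate by parts in $t$ so as to shift derivatives off of $u_{+}$, use the Wronskian identity to cancel the singular $1/t$ behaviour at $t=0$, and then average in $\theta$ by Fubini. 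The outcome should be a bound of the form $\int_{B_{p}(r)} K(r-d)^{2}\,dA\leq C\,r$ with $C$ independent of $r$, yielding the required $o(r^{2})$ estimate.

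\emph{Step 3 (rigidity).} If $\liminf A_{p}(r)/(\pi r^{2})=1$, tracing the inequalities in Step 2 back forces $u_{a}\equiv u_{+}$ along every ray from $p$; by the Wronskian identity and the Riccati equation this forces $K\circ\gamma_{\theta}\equiv 0$ along every ray, and since the radial geodesics cover $\mathreal^{2}$ we conclude $K\equiv 0$.

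\emph{Main obstacle.} The crux is the estimate in Step 2. In Hopf's original compact torus argument, boundedness of the Riccati on the compact unit tangent bundle together with Birkhoff's ergodic theorem sufficed; here $SM$ is non-compact, $u_{+}$ is not a priori uniformly bounded, and the ergodic theorem is unavailable. The averaging in $\theta$ must therefore exploit the global Wronskian identity $a\,b_{+}(u_{a}-u_{+})=1$ and the fact that $\exp_{p}$ is a diffeomorphism, probably through a self-improving intermediate estimate (such as $\liminf L_{p}(r)/(2\pi r)\geq 1$, where $L_{p}(r)$ is the length of the metric circle) that decouples the boundary contributions from the singular behaviour at the origin.
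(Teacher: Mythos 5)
Your Step 1 is correct, and it is in fact equivalent to what the paper does at the corresponding stage: the identity $A_p(r)=\pi r^2-\tfrac12\int_{B(p,r)}K\,(r-d)^2\,dA$ is precisely the twice-integrated form of the Gauss--Bonnet relation $A_p''(r)=2\pi-\int_{B(p,r)}K\,dA$, so the theorem is indeed equivalent to $\limsup_{r\to\infty}r^{-2}\int_{B(p,r)}K\,(r-d)^2\,dA\le 0$. The problem is that the proof stops there. Step 2, which carries the entire content of the theorem, is not carried out: you announce that substituting $K=-u_+'-u_+^2$, integrating by parts and invoking the Wronskian ``should'' give $\int_{B(p,r)}K\,(r-d)^2\,dA\le Cr$, but no such estimate is derived. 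If you actually perform the integration by parts you are left with terms of the form $\int_0^r\bigl(-2(r-t)\,a\,u_+ +(r-t)^2\,u_+/b_+\bigr)\,dt$, and on a noncompact surface $u_+$ has no a priori pointwise bound, so nothing in the proposal controls them; you flag this yourself as the ``main obstacle'', which is an accurate self-assessment that this is a plan rather than a proof. The structural reason your route is hard is that by restricting to radial directions you discard the one feature that makes Hopf's method close up, namely a sign-definite quantity. The paper instead integrates the Riccati identity $\mathcal{L}_\Phi U+U^2+K\circ\sigma=0$ for the stable solution over the \emph{full} unit tangent bundle of the ball with respect to Liouville measure (so that the pushforward relation $\sigma_*\mu=2\pi A$ produces $2\pi\int_{B(p,r)}K\,dA$), obtains the nonnegative term $F_p(r)=\int_{\sigma^{-1}(\overline{B(p,r)})}U^2\,d\mu$, bounds the boundary flux by Cauchy--Schwarz in terms of $F_p'$ and $A_p'$, and then resolves the resulting differential inequality $F_p\le 2\pi A_p''+\sqrt{2\pi}\,(F_p'A_p')^{1/2}-4\pi^2$ by the sharp asymptotic estimate of Lemma~\ref{lem:analysis}. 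That last lemma is the genuinely new ingredient of the paper and has no counterpart in your outline.

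Step 3 is also flawed as stated. The Wronskian normalisation $a\,b_+(u_a-u_+)\equiv 1$ that you invoke makes $u_a\equiv u_+$ impossible, so equality in the theorem cannot force $u_a\equiv u_+$; indeed in the flat plane, where equality holds, one has $u_a=1/t$ and $u_+=0$. The correct equality analysis (as in the paper) reads off $\sup F_p=0$, i.e.\ $U=0$ almost everywhere on the unit tangent bundle, and then the Riccati equation \eqref{eqn:lie-riccati} forces $K=0$.
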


Note that, for every $\epsilon>0$, one can easily find complete planes with non-positive curvature and conical end such that $\lim_{r\to\infty}A_p(r) / \pi r^2=1+\epsilon$. These examples show that the estimate in Theorem~\ref{thm:plane} is optimal. Note moreover, that Theorem~\ref{thm:plane} does not follow from the following well-known conjecture on the area of small disks, see \cite{CROKE09}: If a metric ball $B(p,r)$ with center $p$ and radius $r$ in a complete Riemannian surface has no conjugate points, then $A_p(r)\geq \frac{8}{\pi} r^2$ with equality if and only if $B(p,r)$ is isometric to a hemisphere of radius $r$.

To state our rigidity result for cylinders we first define what it means that an end of a cylinder has subquadratic area growth. As usual, let $A$ denote the area induced by the Riemannian metric.

\begin{definition}
  Let $S$ be a complete, connected Riemannian surface. An end $\mathcal{E}$ of $S$ has \emph{subquadratic area growth} if there exists a neighborhood $U\subseteq S$ of $\mathcal{E}$ such that
  \[
    \liminf_{r\to\infty}\frac{A(U\cap B(p,r))}{r^2}=0
  \]
for one (and hence every) point $p\in S$.
\end{definition}

\begin{theorem}\label{thm:A}
  Let $g$ be a complete Riemannian metric without conjugate points on the cylinder $S^1\times\mathreal$. If both ends of the cylinder have subquadratic area growth then $g$ is flat.
\end{theorem}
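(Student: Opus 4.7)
The plan is to reduce Theorem~\ref{thm:A} to the equality case of Theorem~\ref{thm:plane} applied on the Riemannian universal cover. Let $\pi\colon(\tilde{X},\tilde{g})\to(S^{1}\times\mathreal,g)$ be the universal cover; then $\tilde{X}$ is diffeomorphic to $\mathreal^{2}$ and $\tilde{g}$ is complete, without conjugate points, and invariant under a generator $\alpha$ of the deck group $\mathinteger$. Fix $p\in S^{1}\times\mathreal$ and a lift $\tilde{p}$. Theorem~\ref{thm:plane} applied to $(\tilde{X},\tilde{g})$ already gives
\[
\liminf_{r\to\infty}\frac{\area(B(\tilde{p},r))}{\pi r^{2}}\ge 1,
\]
with equality forcing $\tilde{g}$, and hence $g$, to be flat. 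Everything therefore reduces to proving the matching upper bound along some sequence $r_{n}\to\infty$.

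The pivot is the covering identity
\[
\area(B(\tilde{p},r))=\int_{S^{1}\times\mathreal}m_{r}(q)\,d\area(q),\qquad m_{r}(q)\defeq\#\{k\in\mathinteger:\dist(\tilde{p},\alpha^{k}\tilde{q})<r\},
\]
valid for any lift $\tilde{q}$ of $q$. From the subquadratic hypothesis on both ends one extracts a sequence $r_{n}\to\infty$ with $\area(B(p,r_{n}))/r_{n}^{2}\to 0$. The task then is a sharp asymptotic estimate of the multiplicity. For this one would introduce the translation length $\ell\defeq\inf_{x}\dist(x,\alpha x)$, check $\ell>0$, construct an \emph{axis} of $\alpha$ (an $\alpha$-invariant geodesic $\tilde{A}\subset\tilde{X}$ on which $\alpha$ is translation by $\ell$), and use the Busemann function $b_{\alpha}(x)\defeq\lim_{k\to\infty}(\dist(x,\alpha^{k}\tilde{p})-k\ell)$ together with the signed distance $h$ from $\tilde{A}$ to obtain an asymptotic Euclidean picture
\[
\dist(\tilde{p},\alpha^{k}\tilde{q})^{2}=(k\ell+b_{\alpha}(\tilde{q}))^{2}+h(\tilde{q})^{2}+o(k^{2}),
\]
leading to the Euclidean-type count $m_{r}(q)=(2/\ell)\sqrt{r^{2}-h(\tilde{q})^{2}}+o(r)$, uniformly on compacts. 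Integrating this estimate against $d\area$ and splitting the integral into the compact part and the two ends, where subquadratic growth kills the error, would then yield the desired matching bound $\area(B(\tilde{p},r_{n}))\le(1+o(1))\pi r_{n}^{2}$.

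The main obstacle is the construction of the axis of $\alpha$ and the control of the shape of $\alpha$-orbits: on a non-compact surface, the existence of a length-minimiser in the free homotopy class of the cylinder core is not automatic. The subquadratic area hypothesis is exactly the right substitute. A minimising sequence of non-contractible loops escaping to one of the ends would sweep out an annular region of area $\asymp r^{2}$, contradicting subquadratic area growth of that end; and an $\alpha$-orbit of $\tilde{p}$ drifting linearly away from a candidate axis would produce a slanted strip whose projection to the cylinder has quadratic area. Once the axis is secured and the $\alpha$-orbits are shown to lie within bounded distance of it, the Busemann-coordinate description becomes quantitative and the multiplicity estimate above is available. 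Combining this with the lower bound from Theorem~\ref{thm:plane} gives $\area(B(\tilde{p},r_{n}))/(\pi r_{n}^{2})\to 1$, so the equality clause of Theorem~\ref{thm:plane} applies and $\tilde{g}$, and therefore $g$, is flat.
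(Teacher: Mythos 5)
Your strategy---lift to the universal cover, invoke Theorem~\ref{thm:plane} there for the lower bound, and prove a matching upper bound $A(B(\tilde p,r_n))\le(1+o(1))\pi r_n^2$ by counting deck translates---is not the paper's route, and as written it has several genuine gaps. First, the hypothesis gives, for each end separately, a sequence along which $A(U_i\cap B(p,r))/r^2\to 0$; these are a priori two \emph{different} sequences, and a common sequence $r_n$ with $A(B(p,r_n))/r_n^2\to 0$ need not exist, since the limit inferior of a sum can exceed the sum of the limits inferior. This is not a technicality: the paper's proof is organized around a two-parameter exhaustion $H^1_{r_1}\cup H^0\cup H^2_{r_2}$ precisely so that the two ends can be handled along independent radii, with Lemma~\ref{lem:analysis} applied twice in succession. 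Second, the existence of an axis is not available under the hypotheses: subquadratic area growth is a priori compatible with cusp-like ends on which $l(p)\to 0$ (a hyperbolic cusp has no conjugate points and finite area), so $\ell=\inf_x d(x,\alpha x)$ may fail to be positive or to be attained until the theorem is already proved. Your proposed contradiction is backwards: a minimizing sequence of non-contractible loops escaping to an end has bounded lengths and bounds, together with a fixed loop, a subcylinder whose area is controlled by length times distance, hence at most \emph{linear} in $r$---consistent with subquadratic growth, not in conflict with it. Third, where $l(q)$ is small the multiplicity $m_r(q)$ can be enormous (exponential in $r$ for a parabolic), so smallness of $A(U_i\cap B(p,r))$ does not control $\int_{U_i\cap B(p,r)}m_r\,dA$; and the sharp count $m_r(q)=(2/\ell)\sqrt{r^2-h^2}+o(r)$ needs the two-dimensional asymptotic $d(\tilde p,\alpha^k\tilde q)^2=(k\ell+b)^2+h^2+o(k^2)$ uniformly in the regime $|h|\sim k\ell$, which amounts to asymptotic flatness of the cover and is essentially the conclusion being sought.

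The paper never passes to the cover. It proves Theorem~\ref{thm:l} directly by running Hopf's method on compact subcylinders bounded by horocycles of the Busemann functions of the two rays of a minimal geodesic joining the ends (Sections 5--7), feeds the resulting differential inequality into the sharp ODE estimate of Lemma~\ref{lem:analysis} once for each end, and then shows in Proposition~\ref{prp:opening} that subquadratic area growth of an end is equivalent to opening less than linearly. If you wish to pursue your reduction, you would still need the equivalence of Proposition~\ref{prp:opening} as a first step and would then face the axis and multiplicity issues above, so this route does not shortcut the argument.
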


There is an alternative version of Theorem~\ref{thm:A} that involves an assumption on the growth of the lengths of shortest non-contractible loops. Actually, we will prove in Section 8 that the two versions are equivalent. We let $d$ denote the distance induced by the Riemannian metric.

\begin{definition}
  Let $C=S^1\times\mathreal$ be a complete Riemannian cylinder and, for $p\in C$, let $l(p)$ denote the length of a shortest non-contractible loop based at $p$. We say that an end $\mathcal{E}$ of $C$ \emph{opens less than linearly} if there exists a sequence $(p_i)$ in $C$ converging to $\mathcal{E}$ such that
  \[
    \lim_{i\to\infty}\frac{l(p_i)}{\dist (p_i,p_0)}=0.
  \]
\end{definition}

\begingroup
\addtocounter{theorem}{-1}
\def\thetheorem{\arabic{theorem}'}
\begin{theorem}\label{thm:l}
  Let $g$ be a complete Riemannian metric without conjugate points on the cylinder $S^1\times\mathreal$. If both ends of the cylinder open less than linearly then $g$ is flat.
\end{theorem}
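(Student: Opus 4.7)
My approach is to deduce Theorem~\ref{thm:l} from Theorem~\ref{thm:A} by showing that an end which opens less than linearly automatically has subquadratic area growth. Flatness of $g$ will then be immediate from Theorem~\ref{thm:A}.

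Fix an end $\mathcal{E}$ of $C$ with a witnessing sequence $(p_i)$ satisfying $l(p_i)/\dist(p_i,p_0)\to 0$. For each $i$, let $\gamma_i$ be a shortest non-contractible loop based at $p_i$; a standard cut-and-paste argument shows that $\gamma_i$ is a simple, smoothly closed geodesic. Since a simple non-contractible loop on the cylinder separates it into two components, let $U_i$ denote the component containing $\mathcal{E}$. Because $\gamma_i\subset B(p_i,l(p_i)/2)$ retreats into $\mathcal{E}$, the $U_i$ are eventually disjoint from any fixed ball around $p_0$, so $U\defeq U_{i_0}$ is a neighborhood of $\mathcal{E}$ for any chosen index $i_0$.

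To estimate $A(U\cap B(p_0,r))$, for each large $r$ I would choose $i=i(r)$ so that $\dist(p_0,p_i)>r+l(p_i)/2$, which forces $\gamma_i$ and $U_i$ outside $B(p_0,r)$. Consequently $U\cap B(p_0,r)\subseteq U_{i_0}\setminus U_{i(r)}$, a topological annulus bounded by the two closed geodesics $\gamma_{i_0}$ and $\gamma_{i(r)}$. Lifting to the universal cover $\pi\colon\tilde C\cong\mathreal^2\to C$, with deck group generated by an isometry $T$, a fundamental domain $D_i$ for $T$ inside $\pi^{-1}(U_{i_0}\setminus U_i)$ is a compact region whose boundary consists of a lift of $\gamma_{i_0}$ of length $l(p_{i_0})$, a lift of $\gamma_i$ of length $l(p_i)$, and two transversal geodesic arcs of equal length comparable to $\dist(p_0,p_i)$. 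The plan is to bound $A(D_i)\le C\cdot l(p_i)\cdot\dist(p_0,p_i)$, which then yields $A(U\cap B(p_0,r))\le C\cdot l(p_{i(r)})\cdot\dist(p_0,p_{i(r)})$; choosing $i(r)$ with $\dist(p_0,p_{i(r)})$ comparable to $r$, the hypothesis $l(p_i)/\dist(p_i,p_0)\to 0$ then forces $\liminf_{r\to\infty}A(U\cap B(p_0,r))/r^2=0$ as desired.

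The main obstacle is establishing the area bound for $D_i$ without any curvature upper bound. In general, equidistant curves from a geodesic in a surface without conjugate points can grow uncontrollably, so the linear-in-width estimate is not automatic. I expect the argument to use convexity/monotonicity of the displacement function $x\mapsto\dist(x,Tx)$ on $\tilde C$ along normal geodesics to $\tilde\gamma_i$: since $T$ acts freely and $\tilde C$ has no conjugate points anywhere, any large transverse spreading of Jacobi fields near $\tilde\gamma_i$ would eventually produce conjugate points along normal geodesics of the cover. This furnishes a linear bound for the width of $D_i$ in terms of $\dist(p_0,p_i)$ and completes the reduction. Once the resulting subquadratic area growth is established for both ends, Theorem~\ref{thm:A} forces $g$ to be flat.
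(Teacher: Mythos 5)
Your proposal is circular within the logical structure of this paper. You reduce Theorem~\ref{thm:l} to Theorem~\ref{thm:A} by arguing that an end which opens less than linearly has subquadratic area growth, and then invoke Theorem~\ref{thm:A} to conclude flatness. But Theorem~\ref{thm:A} is not an independently available result: in the paper it is itself obtained from Theorem~\ref{thm:l} together with the equivalence of the two hypotheses (Proposition~\ref{prp:opening}). The implication you sketch --- ``opens less than linearly $\Rightarrow$ subquadratic area growth'' --- is exactly the second half of Proposition~\ref{prp:opening} (proved via the annulus area estimate \cite[Proposition~7.2]{BE11}, much as you describe), so what you have outlined is one direction of the equivalence of the two theorems, not a proof of either. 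The entire analytic core is missing: one still has to prove flatness from one of the two growth hypotheses. In the paper this is done by applying E.~Hopf's method to an exhaustion by compact subcylinders bounded by horocycles (level sets of the Busemann functions of the two rays of a line joining the ends, Proposition~\ref{prp:exhaustion}), controlling the boundary integrals of $|U|$ via Cauchy--Schwarz, controlling the lengths of the horocycles by a Bol--Fiala type inequality (Lemma~\ref{lem:71}), and then feeding the resulting differential inequality \eqref{eqn:diffineq-cyl} into the sharp growth estimate of Lemma~\ref{lem:analysis}, applied once in each variable $r_1,r_2$. None of this appears in your proposal, and no substitute argument for flatness is offered.

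A secondary, more local concern: even for the reduction you do attempt, the area bound $A(D_i)\le C\, l(p_i)\,\dist(p_0,p_i)$ for the annular region between the two shortest loops is a genuinely nontrivial statement in the absence of curvature bounds, and your appeal to ``convexity/monotonicity of the displacement function'' and to Jacobi field spreading is only a heuristic. The paper does not prove this from scratch either; it cites \cite[Proposition~7.2]{BE11}, which gives the quantitative bound $\frac{8}{\pi}(s_j+L_j)L_j$ with $L_j$ involving the lengths of \emph{both} boundary loops. If you wish to salvage your strategy, the honest formulation is: your argument (made rigorous via that cited proposition) establishes one implication of Proposition~\ref{prp:opening}, and the remaining task --- the actual rigidity statement --- still requires the full machinery of Sections~5--7.
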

\endgroup

Again, simple examples of cylinders of revolution with non-positive Gausian curvature and conical ends show that the conditions in Theorem~\ref{thm:A} and \ref{thm:l} are optimal, see \cite[Section~1]{BE11}.

Rigidity results of the type of Theorem~\ref{thm:l} have been proved by K. Burns and G. Knieper \cite{BK}, H.~Koehler \cite{KOEHLER}, and by the present authors \cite{BE11}. All of these involve stronger conditions on the growth of $l$ and additional conditions on the Gaussian curvature. So they are far from being optimal. The basic idea, however, is the same in all these papers: E. Hopf's method is applied to an appropriate exhaustion by compact sets. This introduces boundary terms that have to be controlled in the limit and that do not appear in the case of the $2$-torus treated by E. Hopf. Here the essential difficulty is that the geometric quantities that influence these boundary terms might oscillate dramatically in the non-compact situation. Any naive attempt to control them induces unwanted additional assumptions, as present in the previous results. To our surprise, a delicate analysis of the differential inequality that results from E. Hopf's method finally leads to the optimal results presented here.

In \cite{BK} the same method is applied to complete planes without conjugate points. The rigidity result proved in \cite{BK} assumes a strong ``parallel axiom''. In connection with this and Theorem~\ref{thm:plane} we mention the following interesting question posed by G.\ Knieper, see \cite{CROKE08} and \cite[1.7]{BK}.

\begin{openproblem}
  Suppose a complete Riemannian plane $P$ satisfies the parallel axiom, i.e.\ for every geodesic $c$ on $P$ and every point $p\in P$ not on $c$ there exists a unique geodesic through $p$ that does not intersect $c$. Does this imply that $P$ is isometric to the Euclidean plane?
\end{openproblem}

For complete Riemannian manifolds $(M,g)$ with $\operatorname{dim}M=n\geq 3$ the natural generalization of the area growth is the volume growth $V(M,g)$ defined by
\[
  V(M,g)=\liminf_{r\to\infty}\frac{\operatorname{Vol}(B(p,r))}{\beta(n)\,r^n}
\]
where $\beta(n)$ is the volume of the unit ball in Euclidean $n$-space. For compact manifolds $(M,g)$ one considers the volume growth of the universal Riemannian covering space $(\tilde{M},\tilde{g})$. In the compact situation one has the following beautiful rigidity results.

\begin{theorem-nonumber}[C.~Croke \cite{CROKE92}]
  If $(M,g)$ is a compact, connected Riemannian manifold without conjugate points then $V(\tilde{M},\tilde{g})\geq 1$ with equality if and only if $g$ is flat.
\end{theorem-nonumber}

\begin{theorem-nonumber}[D.~Burago, S.~Ivanov \cite{BI95}]
  If $g$ is a $\mathinteger^n$-periodic Riemannian metric on $\mathreal^n$ then $V(\mathreal^n,g)\geq 1$ with equality if and only if $g$ is flat.
\end{theorem-nonumber}

For complete, noncompact Riemannian manifolds there are results showing that arbitrary compactly supported perturbations of certain natural metrics without conjugate points necessarily introduce conjugate points, see \cite{GG}, \cite{CK98b} and \cite{CK98a}.

Another type of rigidity results for complete, noncompact Riemannian manifolds without conjugate points assumes summability conditions on the Ricci curvature, see \cite{INNAMI86}, \cite{INNAMI89} and \cite{GUIMARAES}.

\vspace{0.4em}
\textbf{Plan of the paper.} In Section~2 we derive the equality that arises from E.~Hopf's method in the case that the compact surface without conjugate points has a boundary. The proof of Theorem~\ref{thm:plane} is given in Section~3. It depends on a differential inequality derived from E.~Hopf's method and a sharp estimate for the asymptotic growth of functions satisfying this differential inequality. This sharp estimate is proved in Section~4. In Sections 5 and 6 we describe the exhaustion by compact subcylinders with horocyclic boundary that we use in the case of cylinders. This is based on results from \cite{BE11}. The proof of Theorem~\ref{thm:l} is given in Section~7. It also depends on the sharp growth estimate from Section~4. Finally, in Section~8 we prove the equivalence of Theorem~\ref{thm:A} and Theorem~\ref{thm:l}. 

\vspace{0.4em}
\textbf{Acknowledgment.} This research was supported by the German Research Foundation (DFG) Collaborative Research Center SFB~TR~71.

\sectionbf{Hopf's method}

\noindent E.~Hopf's method from \cite{HOPF} is also applicable in the more general situation of complete surfaces without conjugate points. Lemma~\ref{lem:gausstheorem} below is used to treat the boundary terms that occur in the non-compact situation. It could be of independent interest, and is stated for arbitrary dimensions.

Let $M$ be a complete Riemannian manifold of dimension $n\geq 2$. Let $\langle,\rangle$ be its metric, let $\sigma\colon T^1M\to M$ be its unit tangent bundle, and let $\geodflow\colon T^1M\times\mathreal\to T^1M$ be its geodesic flow. We say that a function $f\colon T^1M\to\mathreal$ has Lie derivative $\mathcal{L}_\geodflow f\colon T^1M\to\mathreal$ with respect to the flow $\geodflow$, if for every $v\in T^1M$ the function $t\mapsto f\circ\geodflow (v,t)$ is differentiable and
\[
(\mathcal{L}_\geodflow f)(v) = \left. \frac{d}{dt} \right|_{t=0} (f\circ\geodflow)(v,t).
\]
We use the Liouville measure $\mlle$ on $T^1M$, as well as the measure $\mllb$ on $T^1M$ that is locally the product of the codimension-one Hausdorff measure $\mhdf^{n-1}$ on $M$ with the standard Lebesgue measure on the fibers of $T^1M$.

In the proof of Theorem~\ref{thm:l} we will exhaust the cylinder by compact subcylinders with horocyclic boundaries. A priori, these boundaries need not be smooth, and this is why we introduce subsets with strong Lipschitz boundary at this point. By definition, a closed subset $A$ of $M$ has strong Lipschitz boundary $B$, if $A=\cl{\mathring{A}}$ and if $B=A\setminus\mathring{A}$ is a strong Lipschitz submanifold of $M$, see \cite[p.~334]{BANGERT81}. If $B$ is nonempty then $\operatorname{dim} B=n-1$. Let $\mathfrak{A}\defeq \sigma^{-1}(A)$ and $\mathfrak{B}\defeq\sigma^{-1}(B)$. Let $B'$ be the points of differentiability of $B$. By Rademacher's theorem the set $B\setminus B'$ has $\mhdf^{n-1}$-measure zero. Let $\mathfrak{B}'\defeq\sigma^{-1}(B')$, and let $N\colon B'\to\mathfrak{B}'$ denote the inner unit normal.

\begin{lemma}\label{lem:gausstheorem}
  If $A$ is a compact subset of $M$ with strong Lipschitz boundary $B$, if $f\colon T^1M\to\mathreal$ is Borel measurable and has Lie derivative $\mathcal{L}_\geodflow f$, and if $f$ and $\mathcal{L}_\geodflow f$ are locally bounded, then
  \[
    \int_\mathfrak{A} \mathcal{L}_\geodflow f \, d\mlle = -\int_{\mathfrak{B}'} f(v)\, \nv \, d\mllb (v).
  \]
\end{lemma}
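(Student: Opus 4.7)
The plan is to compute the derivative at $t=0$ of
\[
F(t)\defeq\int_\mathfrak{A} f\circ\geodflow(\cdot,t)\,d\mlle
\]
in two different ways. On the one hand, since $f$ and $\mathcal{L}_\geodflow f$ are locally bounded and $\mathfrak{A}$ is compact, differentiation under the integral sign is permissible and gives $F'(0)=\int_\mathfrak{A}\mathcal{L}_\geodflow f\,d\mlle$, which is the left-hand side of the desired identity. On the other hand, the $\geodflow$-invariance of Liouville measure yields $F(t)=\int_{\geodflow(\mathfrak{A},t)}f\,d\mlle$, so that
\[
F(t)-F(0)=\int_{\geodflow(\mathfrak{A},t)\setminus\mathfrak{A}}f\,d\mlle-\int_{\mathfrak{A}\setminus\geodflow(\mathfrak{A},t)}f\,d\mlle,
\]
and for small $t$ both pieces are concentrated in a thin collar of $\mathfrak{B}$.

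To identify the limit of this difference quotient I would use a Santal\'o-type flow-box parametrization near $\mathfrak{B}'$. At every $w_0\in\mathfrak{B}'$ where $\langle w_0,N\circ\sigma(w_0)\rangle\ne 0$ there is a neighborhood $U\subset\mathfrak{B}'$ of $w_0$ and some $\varepsilon>0$ such that the map $(w,s)\mapsto\geodflow(w,s)$ is a bi-Lipschitz homeomorphism of $U\times(-\varepsilon,\varepsilon)$ onto an open subset of $T^1M$, with the Jacobian identity $d\mlle=|\langle w,N\circ\sigma(w)\rangle|\,d\mllb(w)\,ds$ on the box. Inside such a chart, $\geodflow(w,s)\in\mathfrak{A}$ iff $s\cdot\langle w,N\circ\sigma(w)\rangle\ge 0$, and $\geodflow(w,s)\in\geodflow(\mathfrak{A},t)$ iff $(s-t)\cdot\langle w,N\circ\sigma(w)\rangle\ge 0$. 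A short case analysis on the sign of $\langle w,N\circ\sigma(w)\rangle$ shows that for $t>0$ exactly one of the two symmetric-difference pieces meets the chart and, in both signs, its signed contribution to $F(t)-F(0)$ equals
\[
\int_U\!\Bigl(\!\int_0^{t}f(\geodflow(w,s'))\,ds'\Bigr)\bigl(-\langle w,N\circ\sigma(w)\rangle\bigr)\,d\mllb(w).
\]
Dividing by $t$ and letting $t\to 0^+$ gives $-\int_U f(w)\,\langle w,N\circ\sigma(w)\rangle\,d\mllb(w)$. Summing over a finite cover of $\mathfrak{B}'\cap\{\langle\cdot,N\circ\sigma\rangle\ne 0\}$ produces the claimed right-hand side.

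The main obstacles are measure-theoretic. Because $B$ is only a strong Lipschitz submanifold, the flow-box parametrization and the Jacobian identity have to be established starting from Rademacher differentiability of $B$ at $\mhdf^{n-1}$-almost every point, combined with a Lipschitz change-of-variables computation and the defining description of $\mlle$ as the product of the volume measure on $M$ with the spherical Lebesgue measure on fibers of $T^1M$. The glancing set $\{v\in\mathfrak{B}':\langle v,N\circ\sigma(v)\rangle=0\}$ must be shown to be $\mllb$-null, which follows from Fubini since in each fiber $T^1_bM$ the tangent vectors to $B$ form a codimension-one great sphere of zero spherical measure. To convert the local flow-box analysis into the global identity I would extract a finite subcover using compactness of $\mathfrak{B}$, observe that orbits staying inside (or outside) $\mathfrak{A}$ throughout $[0,t]$ contribute nothing, and invoke dominated convergence to pass to the limit $t\to 0^+$; the domination is provided by local boundedness of $f$ together with the uniform $O(t)$ bound on the thickness of the collar $\geodflow(\mathfrak{A},t)\triangle\mathfrak{A}$.
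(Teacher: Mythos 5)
Your overall strategy coincides with the paper's: both reduce $\int_\mathfrak{A}\mathcal{L}_\geodflow f\,d\mlle$ to the difference quotient of integrals over $\geodflow^t(\mathfrak{A})\setminus\mathfrak{A}$ and $\mathfrak{A}\setminus\geodflow^t(\mathfrak{A})$, and both identify the limit via the Santal\'o Jacobian $\langle v,N\circ\sigma(v)\rangle$ of the map $(v,s)\mapsto\geodflow(v,s)$ on $\mathfrak{B}'\times\mathreal$. The genuine gap is in your flow-box claim: you assert that around each $w_0\in\mathfrak{B}'$ with $\langle w_0,N\circ\sigma(w_0)\rangle\neq 0$ there are a neighborhood $U\subseteq\mathfrak{B}'$ and a \emph{uniform} $\varepsilon>0$ such that on $U\times(-\varepsilon,\varepsilon)$ one has $\geodflow(w,s)\in\mathfrak{A}$ iff $s\,\langle w,N\circ\sigma(w)\rangle\ge 0$ (and that the box map is injective). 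This presumes that every geodesic issuing from a point of $U$ meets $B$ exactly once during the time interval $(-\varepsilon,\varepsilon)$. For a boundary that is merely Lipschitz this uniformity fails: differentiability of $B$ at $\sigma(w)$ only yields a pointwise time $\varepsilon(w)>0$ before the first recrossing, and $\varepsilon(w)$ can degenerate to $0$ along differentiability points converging to $w_0$ (picture a Lipschitz graph with bumps of height $o(|x|)$ but slope bounded below accumulating at the origin; transversal lines based at those bumps recross the graph after arbitrarily short times). Consequently the identification of $\mathfrak{A}\setminus\geodflow^t(\mathfrak{A})$ with the slab $\{0\le s<t\}$ inside your chart is unjustified, and your final dominated-convergence step has no control over the multiply-crossing orbits.

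The paper closes exactly this hole. It introduces $\mathfrak{D}_\epsilon$, the set of $v\in\mathfrak{B}^+$ whose orbit behaves cleanly up to time $\epsilon$ (forward orbit in $\mathring{\mathfrak{A}}$, backward orbit in the exterior), notes that $\bigcup_{\epsilon>0}\mathfrak{D}_\epsilon=\mathfrak{B}^+$, sandwiches $\mathfrak{A}\setminus\geodflow^\epsilon(\mathfrak{A})$ between $\geodflow(\mathfrak{D}_\epsilon\times(0,\epsilon))$ and $\geodflow\left((\mathfrak{B}^+\cup(\mathfrak{B}\setminus\mathfrak{B}'))\times(0,\epsilon)\right)$, and bounds the $\mlle$-measure of the discrepancy by $\epsilon\int_{\mathfrak{B}^+\setminus\mathfrak{D}_\epsilon}\langle v,N\circ\sigma(v)\rangle\,d\mllb(v)$, which is $o(\epsilon)$; this uses the area-formula inequality \cite[3.2.5]{FEDERER}, which requires no injectivity. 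If you replace your uniform flow box by this exhaustion-and-error estimate --- and obtain the case $\langle v,N\circ\sigma(v)\rangle<0$ by applying the positive case to $\cl{M\setminus A}$, as the paper does --- your argument becomes complete.
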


We will prove the lemma at the end of this section. Now we present E.~Hopf's method, which is based on the following observation made in \cite{HOPF}. As usual, we let $K$ denote the Gaussian curvature of a Riemannian surface.

\begin{lemma}\label{lem:exu}
Let $S$ be a complete Riemannian surface without conjugate points. Then there exists a Borel measurable and locally bounded function $u\colon T^1S\times\mathreal\to\mathreal$ with the following properties: For every $v\in T^1S$ the function $u_v(t)\defeq u(v,t)$ is a solution of the Riccati equation along the geodesic $\gamma_v$ with $\dot{\gamma}_v(0)=v$, i.e.\
\[
(u_v)'(t)+(u_v)^2(t)+K\circ \gamma_v(t)=0
\]
for all $t\in\mathreal$. The function $u$ is invariant under the geodesic flow $\geodflow$, i.e.\
\[
u(v,s+t)=u(\geodflow^sv,t)
\]
for all $v\in T^1S$ and all $s,t\in\mathreal.$  
\end{lemma}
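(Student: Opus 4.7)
The plan is to realize $u$ as the logarithmic derivative of a positive ``stable'' normal Jacobi field along each geodesic, following E.~Hopf's construction.

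Fix $v\in T^1S$ and identify the normal line bundle along $\gamma_v$ with $\mathreal$ via parallel transport; normal Jacobi fields then become scalar solutions of $y''+(K\circ\gamma_v)\,y=0$. For $T>0$, let $y_{v,T}$ be the unique such solution with $y_{v,T}(-T)=0$ and $y_{v,T}(0)=1$; absence of conjugate points guarantees $y_{v,T}>0$ on $(-T,\infty)$. Set $u_T(v)\defeq y'_{v,T}(0)$. Continuous dependence of ODE solutions on initial data makes $u_T\colon T^1S\to\mathreal$ continuous in $v$.

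The first key step is that $T\mapsto u_T(v)$ is non-increasing. The function $w_T(t)\defeq y'_{v,T}(t)/y_{v,T}(t)$ satisfies the Riccati equation on $(-T,\infty)$ and blows up to $+\infty$ at $t=-T$. For $T_1<T_2$, two Riccati solutions that meet at a point of a common interval must agree everywhere by ODE uniqueness; so $w_{T_1}$ and $w_{T_2}$ cannot cross on $(-T_1,\infty)$, and the blow-up of $w_{T_1}$ at $-T_1$ forces $w_{T_1}\ge w_{T_2}$, hence $u_{T_1}(v)\ge u_{T_2}(v)$. A symmetric construction on the reversed interval produces $\tilde u_S(v)\defeq\tilde y'_{v,S}(0)$, where $\tilde y_{v,S}(S)=0$ and $\tilde y_{v,S}(0)=1$; the same non-crossing argument yields $\tilde u_S(v)\le u_T(v)$ for all $S,T>0$. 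Thus for $T\ge 1$ the value $u_T(v)$ is trapped between the continuous functions $\tilde u_1$ and $u_1$, so the decreasing limit
\[
  u^+(v)\defeq\lim_{T\to\infty}u_T(v)
\]
is locally bounded; as a monotone limit of continuous functions it is Borel measurable.

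Now set $u(v,t)\defeq u^+(\geodflow^t v)$. Flow invariance $u(v,s+t)=u(\geodflow^s v,t)$ is immediate from the definition. To check the Riccati equation along $\gamma_v$, note that scale-invariance of $y'/y$ gives $u(v,t)=(y^s_v)'(t)/y^s_v(t)$, where $y^s_v$ is a positive stable Jacobi field along $\gamma_v$ obtained as a suitably normalized limit of the $y_{v,T}$; differentiating and using $(y^s_v)''+(K\circ\gamma_v)\,y^s_v=0$ produces $(u_v)'+(u_v)^2+K\circ\gamma_v=0$. Local boundedness of $u$ on $T^1S\times\mathreal$ follows from local boundedness of $u^+$ together with continuity of $\geodflow$. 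The main obstacle I expect is the non-crossing/Sturm comparison underlying both the monotonicity of $u_T$ and the lower bound $\tilde u_S\le u_T$; the rest is a routine combination of continuous dependence for ODEs, the scale-invariance of $y'/y$, and the tautological flow identity for $u^+$.
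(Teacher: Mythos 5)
Your proposal is correct and is essentially the argument the paper relies on: the paper's ``proof'' is just a citation to Hopf's original construction of the Riccati solution via Jacobi fields vanishing at $-T$ and to \cite{BE11} for local boundedness, and your non-crossing/monotonicity argument together with the sandwich $\tilde u_1\le u_T\le u_1$ between continuous functions reconstructs exactly that. The only point stated a bit loosely is that the limit Jacobi field $y^s_v$ is everywhere positive (so that $y'/y$ is globally defined), but this follows from $y^s_v\ge 0$, $y^s_v(0)=1$ and ODE uniqueness, so there is no gap.
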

\begin{proof}
  Except for the local boundedness this follows from \cite{HOPF}. That $u$ is locally bounded also in the non-compact situation is proved in \cite[p.\ 787]{BE11}.
\end{proof}

We express the properties of $u$ in the following form. By the $\geodflow$-invariance of $u$ there exists a function $U\colon T^1S\to\mathreal$ such that
\begin{equation}\label{eqn:exU}
  u=U\circ\geodflow.
\end{equation}
It is given by $U(v)=u(v,0)$ for every $v\in T^1S$. It follows that $U$ is Borel measurable and locally bounded. Since the $u_v$ are solutions of the Riccati equation, it follows that $U$ has Lie derivative $\mathcal{L}_\geodflow U (v) = (u_v)'(0)$ for every $v\in T^1S$, and that $U$ satisfies the equation
\begin{equation}\label{eqn:lie-riccati}
  \mathcal{L}_\geodflow U + U^2 + K\circ\sigma =0
\end{equation}
on $T^1S$. Let $A$ denote the area induced by the Riemannian metric.

\begin{proposition}\label{lem:keylem}
  Let $S$ be a complete Riemannian surface without conjugate points, and let $U$ be as in \eqref{eqn:exU}. If $Q$ is a compact subset of $S$ with strong Lipschitz boundary $\partial Q$, then
  \[
    \int_{\sigma^{-1}(Q)} U^2 \,d\mlle = -2\pi \int_Q K \,dA + \int_{\sigma^{-1}(\partial Q')} U(v)  \, \nv  \,d\mllb (v).
  \]
\end{proposition}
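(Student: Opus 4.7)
The plan is to apply Lemma~\ref{lem:gausstheorem} to the function $f = U$ on the compact set $A = Q$, and then to convert the Liouville-measure integral of $K\circ\sigma$ into an area integral by disintegrating $\mlle$ over the unit circle fibers.

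First I would check the hypotheses of Lemma~\ref{lem:gausstheorem}. By Lemma~\ref{lem:exu} and the construction \eqref{eqn:exU}, $U$ is Borel measurable and locally bounded. Moreover $U$ admits the Lie derivative $\mathcal{L}_\geodflow U$ given fiberwise by $(u_v)'(0)$, and the Riccati identity \eqref{eqn:lie-riccati} gives
\[
\mathcal{L}_\geodflow U = -U^2 - K\circ\sigma,
\]
whose right-hand side is locally bounded since $U$ is locally bounded and $K$ is continuous. Hence Lemma~\ref{lem:gausstheorem} applies and yields
\[
\int_{\sigma^{-1}(Q)} \mathcal{L}_\geodflow U \, d\mlle \;=\; -\int_{\sigma^{-1}(\partial Q')} U(v)\,\nv\, d\mllb(v).
\]

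Next, substituting \eqref{eqn:lie-riccati} into the left-hand side gives
\[
-\int_{\sigma^{-1}(Q)} U^2\, d\mlle \;-\; \int_{\sigma^{-1}(Q)} K\circ\sigma \, d\mlle \;=\; -\int_{\sigma^{-1}(\partial Q')} U(v)\,\nv\, d\mllb(v).
\]
To handle the curvature term I would use the standard fact that on a surface the Liouville measure $\mlle$ disintegrates over $Q$ as the product of the area measure $dA$ on $Q$ with the length measure on the unit tangent circles, whose total mass is $2\pi$. Since $K\circ\sigma$ is constant on each fiber, Fubini yields
\[
\int_{\sigma^{-1}(Q)} K\circ\sigma \, d\mlle \;=\; 2\pi \int_Q K\, dA.
\]

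Rearranging the previous identity then gives exactly the claimed equality. The only subtle step is verifying the hypotheses of Lemma~\ref{lem:gausstheorem}, and in particular the local boundedness of $\mathcal{L}_\geodflow U$; but this is immediate from \eqref{eqn:lie-riccati} once $U$ itself is known to be locally bounded, which is supplied by Lemma~\ref{lem:exu}. The rest is a direct calculation.
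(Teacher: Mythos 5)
Your proposal is correct and follows essentially the same route as the paper: integrate the Riccati identity \eqref{eqn:lie-riccati} over $\sigma^{-1}(Q)$, use that the pushforward of $\mlle$ under $\sigma$ is $2\pi\area$ to convert the curvature term, and apply Lemma~\ref{lem:gausstheorem} to the Lie derivative term to produce the boundary integral. Your additional verification of the hypotheses of Lemma~\ref{lem:gausstheorem} (in particular the local boundedness of $\mathcal{L}_\geodflow U$ via \eqref{eqn:lie-riccati}) is a sound and welcome elaboration of what the paper leaves implicit.
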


\begin{proof}
We integrate \eqref{eqn:lie-riccati} over $\sigma^{-1}(Q)$ with respect to $\mlle$, and use that the pushforward of $\mlle$ under $\sigma$ is given by $2\pi\area$. Hence we obtain
\[
  \int_{\sigma^{-1}(Q)} U^2 \,d\mlle = -2\pi \int_Q K \,d\area - \int_{\sigma^{-1}(Q)} \mathcal{L}_\geodflow U \,d\mlle.
\]
Applying Lemma~\ref{lem:gausstheorem} to the second summand proves the proposition.
\end{proof}

E. Hopf proves Proposition~\ref{lem:keylem} in case $S$ is a closed surface and $Q=S$. In this case there is no boundary term, see the last equation in \cite{HOPF}. Since in case of the $2$-torus the integral of the Gaussian curvature is zero, the remaining equation implies that $U=0$ almost everywhere. Then \eqref{eqn:lie-riccati} implies that $K=0$, i.e.\ $g$ is flat.

There are previous versions of Proposition~\ref{lem:keylem}, see \cite[1.3]{BK} and \cite[Lem\-ma~4.2]{BE11}. Here, we relax the conditions on the boundary of the subset, and sharpen the result to an equality.

\begin{proof}[Proof of Lemma~\ref{lem:gausstheorem}]
  By definition of the Lie derivative we have the pointwise convergence $\mathcal{L}_\geodflow f=\lim_{\epsilon\to 0} \frac{1}{\epsilon} (f\circ\geodflow^\epsilon - f)$. In particular $\mathcal{L}_\geodflow f$ is Borel measurable. Lebesgue's dominated convergence theorem implies that
\[
  \int_\mathfrak{A} \mathcal{L}_\geodflow f \, d\mlle =\lim_{\epsilon\to 0} \frac{1}{\epsilon}\int_\mathfrak{A}(f\circ\geodflow^\epsilon - f)\, d\mlle.
\]
Since $\geodflow$ preserves $\mlle$ we obtain
\[
  \int_\mathfrak{A} \mathcal{L}_\geodflow f \, d\mlle =\lim_{\epsilon\to 0} \frac{1}{\epsilon}\left( \int_{\geodflow^\epsilon(\mathfrak{A})\setminus\mathfrak{A}} f \, d\mlle - \int_{\mathfrak{A}\setminus\geodflow^\epsilon(\mathfrak{A})} f \, d\mlle \right).
\]
Let $\mathfrak{B}^+\defeq \{ v\in\mathfrak{B}' \colon \nv >0 \}$. We will prove that we have
\begin{equation} \label{eqn:gausstheorem1}
   \lim_{\epsilon\to 0} \frac{1}{\epsilon}\int_{\mathfrak{A}\setminus\geodflow^\epsilon(\mathfrak{A})} f\, d\mlle = \int_{\mathfrak{B}^+} f(v)\, \nv \, d\mllb(v),
\end{equation}
and
\begin{equation}\label{eqn:gausstheorem2}
   \lim_{\epsilon\to 0} \frac{1}{\epsilon}\int_{\geodflow^\epsilon(\mathfrak{A})\setminus\mathfrak{A}} f\, d\mlle = \int_{\mathfrak{B}'\setminus\mathfrak{B}^+} f(v)\, \langle v, - N\circ\sigma(v)\rangle \, d\mllb(v).
\end{equation}
The lemma then follows by combining the preceding three equations.

We first establish formulas \eqref{eqn:af1} - \eqref{eqn:af3} below that are closely related to Santal\'{o}'s formula \cite[eq.~(21)]{SANTALO}. These can be found in \cite[(3.1) - (3.2)]{BANGERT81} in a slightly more special form. We extend the proof that is given in \cite{BANGERT81}. Obviously $\mathfrak{B}\times\mathreal$ is a $(2n-1)$-dimensional strong Lipschitz submanifold of $T^1M\times\mathreal$. Let $\check{\geodflow}\defeq\geodflow\restriction \mathfrak{B}\times\mathreal$. If $J\subseteq \mathfrak{B}\times\mrealpos$ is measurable, then \cite[3.2.5]{FEDERER} implies that
\begin{equation}\label{eqn:af1}
  \mlle(\geodflow(J)) \leq \int_J |\det \check{\geodflow}_*|\, d(\mllb \otimes L^1),
\end{equation}
with equality if $\geodflow\restriction J$ is injective, see \cite[(2.2)]{BANGERT81}. Since $\geodflow$ preserves $\mlle$, we have
\begin{equation}\label{eqn:af2}
  |\det \check{\geodflow}_*| (v,t)=\nv
\end{equation}
for every $(v,t)\in\mathfrak{B}'\times\mrealpos$, see \cite[p.~337]{BANGERT81}. From \cite[3.2.5]{FEDERER} it follows that\break for every measurable $J\subseteq \mathfrak{B}\times\mrealpos$ such that $\geodflow\restriction J$ is injective we have
\begin{equation}\label{eqn:af3}
  \int_{\geodflow(J)} f\, d\mlle = \int_J f\circ\geodflow (v,t) \, \nv\, d(\mllb \otimes L^1).
\end{equation}

We next prove \eqref{eqn:gausstheorem1} and \eqref{eqn:gausstheorem2}. For every $\epsilon>0$ let
\[
  \mathfrak{D}_\epsilon\defeq \{ v\in \mathfrak{B}^+\colon \geodflow^t(v)\in\mathring{\mathfrak{A}} \text{ and } \geodflow^{-t}(v)\in(T^1M\setminus\mathfrak{A})\mathring{} \text{ for } t\in (0,\epsilon)\}.
\]
Then $\bigcup_{\epsilon>0}\mathfrak{D}_\epsilon = \mathfrak{B}^+$, and $\geodflow\restriction \mathfrak{D}_\epsilon\times [0,\epsilon]$ is injective. Note that
\begin{equation} \label{eqn:af4}
  \geodflow(\mathfrak{D}_\epsilon\times (0,\epsilon))
\,\subseteq\, \mathfrak{A}\setminus\geodflow^\epsilon (\mathfrak{A})
\,\subseteq\, \geodflow\left((\mathfrak{B}^+\cup (\mathfrak{B}\setminus\mathfrak{B}'))\times (0,\epsilon)\right)
\end{equation}
for every $\epsilon>0$. The set-theoretic difference of the set on the right and the set on the left is contained in $\geodflow\left(\left((\mathfrak{B}^+\setminus\mathfrak{D}_\epsilon)\cup (\mathfrak{B}\setminus\mathfrak{B}')\right) \times (0,\epsilon)\right)$. By \eqref{eqn:af1} and \eqref{eqn:af2} we have
\begin{align*}
  \lim_{\epsilon\to 0}\frac{1}{\epsilon}
\mlle\left(
  \geodflow\left(
     \left((\mathfrak{B}^+\setminus\mathfrak{D}_\epsilon)\cup (\mathfrak{B}\setminus\mathfrak{B}')\right) \times (0,\epsilon)
  \right) 
\right)\\
\leq \lim_{\epsilon\to 0}\int_{\mathfrak{B}^+\setminus\mathfrak{D}_\epsilon} \nv \, d\mllb(v),
\end{align*}
where the right hand side, and hence the left hand side, is zero. Now it follows from \eqref{eqn:af4} and from \eqref{eqn:af3} that
\begin{align*}
  \lim_{\epsilon\to 0}\frac{1}{\epsilon}\int_{\mathfrak{A}\setminus\geodflow^\epsilon (\mathfrak{A})} f\, d\mlle
&=   \lim_{\epsilon\to 0}\frac{1}{\epsilon}\int_{\geodflow(\mathfrak{D}_\epsilon\times (0,\epsilon))} f\, d\mlle \\
&= \lim_{\epsilon\to 0}\frac{1}{\epsilon}\int_{\mathfrak{D}_\epsilon\times (0,\epsilon)} f\circ\geodflow (v,t)\, \nv\, d(\mllb\otimes L^1).
\end{align*}
Since $\lim_{\epsilon\to 0}\frac{1}{\epsilon} \int_0^\epsilon f\circ\geodflow (v,t) \, dt = f(v)$ for every $v$, Fubini's theorem and Lebesgue's dominated convergence theorem applied to the right hand side prove \eqref{eqn:gausstheorem1}. Equation \eqref{eqn:gausstheorem2} follows from \eqref{eqn:gausstheorem1}, since $\tilde{A}\defeq\cl{M\setminus A}$ is a closed subset of $M$ that has strong Lipschitz boundary $B$, and since the symmetric difference of $\geodflow^\epsilon (\mathfrak{A})\setminus\mathfrak{A}$ and $\tilde{\mathfrak{A}}\setminus\geodflow^\epsilon (\tilde{\mathfrak{A}})$ is contained in $\mathfrak{B}$, and hence has $\mlle$-measure zero.
\end{proof}

\sectionbf{The case of the plane}

\begin{proof}[Proof of Theorem~\ref{thm:plane}.]
Let $U\colon T^1\mathreal^2\to\mathreal$ be as in \eqref{eqn:exU}.

We apply E.\ Hopf's method to the exhaustion that is given by the closed metric balls with center $p$. By Proposition~\ref{lem:keylem} we have
\[
  \int_{\sigma^{-1}(\cl{B(p,r)})} U^2 \,d\mlle \leq -2\pi \int_{B(p,r)} K \,dA + \int_{\sigma^{-1}(\partial B(p,r))} |U| \,d\mllb 
\]
for every $r> 0$. We estimate the boundary term, using the Cauchy-Schwarz inequality and the local product structure of the measure $\mllb$, by
\[
  \int_{\sigma^{-1}(\partial B(p,r))} |U| \,d\mllb\leq \left( \int_{\sigma^{-1}(\partial B(p,r))} U^2\,d\mllb \cdot 2\pi \mhdf^1(\partial B(p,r))\right)^\frac{1}{2}.
\]

The coarea formula \cite[3.2.11]{FEDERER}, applied to the distance function from $p$, implies that the area of the metric ball $B(p,r)$ satisfies
\begin{equation}\label{eqn:A=inth}
  A_p(r)=\int_0^r \mhdf^1 (\partial B(p,t))\, dt
\end{equation}
for every $r> 0$. The function $A_p\colon\mrealpos\to\mrealpos$ is smooth and non-decreasing. Similarly, if we define $F_p\colon\mrealpos\to\mrealnneg$ at $r$ as the left hand side of the first of the above two inequalities, then
\[
  F_p(r)\defeq\int_{\sigma^{-1}(\cl{B(p,r)})} U^2 \,d\mlle=\int_0^r  \left(\int_{\sigma^{-1}(\partial B(p,t))}U^2\,d\mllb\right) dt
\]
for every $r> 0$, by the coarea formula in $T^1\mathreal^2$. The function $F_p$ is non-decreasing, and by \cite[2.9.20]{FEDERER} it is locally absolutely continuous.

We combine the preceding two inequalities, and obtain
\[
F_p(r)   \leq   -2\pi \int_{B(p,r)} K\, dA   + \left(F'_p(r) \cdot 2\pi A_p'(r) \right)^\frac{1}{2}
\]
for almost every $r> 0$. From \eqref{eqn:A=inth} and the first variation formula it follows that the second derivative of $A_p$ at $r$ equals the rotation of $\partial B(p,r)$, i.e.\ the integral of the geodesic curvature of $\partial B(p,r)$ with respect to the inward pointing normal. Hence the Gauss-Bonnet theorem implies
\[
  A_p''(r) = 2\pi - \int_{B(p,r)} K\, dA 
\]
for every $r> 0$. From the preceding two expressions we obtain a differential inequality for the functions $A_p$ and $F_p$, valid almost everywhere on $\mrealpos$:
\[
  F_p \;\leq\; 2 \pi A_p'' + \sqrt{2\pi}\big(F_p'\, A_p'\big)^\frac{1}{2} -4\pi^2.
\]

We apply Lemma~\ref{lem:analysis} below to this inequality. It is stated in a slightly more general form, since we will also apply it in the proof of Theorem~\ref{thm:l}. We let $A=A_p$, $F=F_p$, $R=A_p''$, $a=2\pi$, $b=\sqrt{2\pi}$, $c=-4\pi^2$, and obtain
\[
  0 \leq \sup F_p \leq 4\pi \left( \liminf_{r\to\infty}\frac{A_p(r)}{r^2} -\pi\right).
\]
This implies the inequality $\liminf_{r\to\infty}\frac{A_p(r)}{\pi r^2}\geq 1$ claimed in Theorem~\ref{thm:plane}. If equality holds we obtain $F_p= 0$. The remainder of the proof is the same as in E.\ Hopf's original argument: By definition of $F_p$, we have $U=0$ almost everywhere, and then \eqref{eqn:lie-riccati} implies that $K=0$, i.e.\ $g$ is flat.
\end{proof}

\sectionbf{A sharp estimate for an ODE inequality}

\noindent{}Here we provide the analysis of the differential inequality that results from E.\ Hopf's method applied to the plane and the cylinder.

\begin{lemma}\label{lem:analysis}
  Let $A,F\colon\mrealpos\to\mrealnneg$ be non-decreasing and locally absolutely continuous, and suppose that $R\in L^1_{\operatorname{loc}}(\mrealpos,\mathreal)$ satisfies\break $\int_0^r \left(\int_0^\rho R(t) \,d t\right) d\rho \leq A(r)$ for every $r>0$. If $a,b>0$ and $c\in\mathreal$ are constants such that the differential inequality 
\[
  F\leq a \, R + b\,(F'A')^\frac{1}{2} + c
\]
holds almost everywhere on $\mrealpos$, then
\[
  \sup F\leq 2 a\, \liminf_{r\to\infty}\frac{A(r)}{r^2} +c.\]
\end{lemma}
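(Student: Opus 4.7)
The plan is to integrate the inequality $F\le aR+b(F'A')^{1/2}+c$ twice over $[r_0,r]$, estimate each term sharply via Cauchy--Schwarz, and pass to the limit along a subsequence realizing $L:=\liminf_{r\to\infty}A(r)/r^2$. With $M:=\sup F$, the target is to reach, for every $r_0>0$,
\[
F(r_0)\le 2aL+2b\sqrt{L(M-F(r_0))}+c,
\]
after which choosing $r_0$ with $F(r_0)\to M$ eliminates the square-root term and delivers $M\le 2aL+c$.

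Concretely, Fubini turns the twice-integrated inequality into
\[
\int_{r_0}^r(r-s)F(s)\,ds\le a\int_{r_0}^r(r-s)R(s)\,ds+b\int_{r_0}^r(r-s)\sqrt{F'A'}\,ds+\tfrac{c}{2}(r-r_0)^2.
\]
I would then bound the three new terms as follows. (i) For the $R$-term, changing the order of integration, splitting $\int_{r_0}^\rho R=\int_0^\rho R-\int_0^{r_0}R$, and invoking $\int_0^r\int_0^\rho R\le A(r)$ gives $\int_{r_0}^r(r-s)R\le A(r)+c_1(r_0)+c_2(r_0)(r-r_0)$, with constants depending only on $r_0$. (ii) For the middle term, Cauchy--Schwarz with the split $[(r-s)\sqrt{A'}]\cdot\sqrt{F'}$, together with the integration-by-parts estimate $\int_{r_0}^r(r-s)^2A'(s)\,ds\le A(r)(r-r_0)^2$ (which uses only $A,A'\ge 0$), yields $\int_{r_0}^r(r-s)\sqrt{F'A'}\,ds\le(r-r_0)\sqrt{A(r)(F(r)-F(r_0))}$. (iii) On the left, monotonicity of $F$ gives $\int_{r_0}^r(r-s)F(s)\,ds\ge F(r_0)(r-r_0)^2/2$. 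Substituting these, dividing by $(r-r_0)^2/2$, and letting $r=r_k\to\infty$ along a subsequence with $A(r_k)/r_k^2\to L$ makes the $c_i$-errors vanish, turns the $R$-contribution into $2aL$, and (using the monotone convergence $F(r_k)\to M$) turns the Cauchy--Schwarz piece into $2b\sqrt{L(M-F(r_0))}$; the boxed inequality follows, and then the choice $F(r_0)\to M$ finishes the proof.

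The main obstacle is producing $2a$ rather than $2a+b^2$ or worse: the combination of the sharp identity $\int(r-s)^2A'\le A(r)(r-r_0)^2$ with the vanishing of the cross term only in the final limit $F(r_0)\to M$ is what makes the constant exact. Any looser step, e.g.\ the AM--GM bound $b\sqrt{F'A'}\le (bF'+bA')/2$, would contaminate the answer with an extra $b^2L$ or a linear-in-$r$ contribution that survives the division by $(r-r_0)^2$. A secondary subtlety is the case $M=\infty$ with $L<\infty$, where the square root $\sqrt{M-F(r_0)}$ is infinite and the derivation above is vacuous. I would dispatch this in advance by running the analogous double-integration argument from $0$ rather than $r_0$, using the cruder Cauchy--Schwarz bound $\sqrt{A(r)F(r)}$ in place of $\sqrt{A(r)(F(r)-F(r_0))}$: this produces an a priori inequality of the form $M\le C_1L+C_2\sqrt{LM}+C_3$, which forces $M<\infty$ whenever $L<\infty$, at which point the sharp argument above applies.
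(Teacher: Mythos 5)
Your main computation is correct and follows the paper's strategy (integrate the inequality twice, apply Cauchy--Schwarz, pass to a subsequence realizing $L:=\liminf_{r\to\infty}A(r)/r^2$, then let $r_0\to\infty$ so that the cross term vanishes), but you apply Cauchy--Schwarz with the roles of $F'$ and $A'$ interchanged: you bound $\int_{r_0}^r(r-s)\sqrt{F'A'}\,ds$ by $\bigl(\int_{r_0}^r(r-s)^2A'\bigr)^{1/2}\bigl(\int_{r_0}^rF'\bigr)^{1/2}\le(r-r_0)\sqrt{A(r)\,(F(r)-F(r_0))}$, whereas the paper uses $\bigl(\int_{r_0}^r(r-s)^2F'\bigr)^{1/2}\bigl(\int_{r_0}^rA'\bigr)^{1/2}$. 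Both splits produce the exact constant $2a$ in the limit, but the paper's choice has a decisive extra feature: by integration by parts, $\int_{r_0}^r(r-s)^2F'(s)\,ds$ equals twice the quantity $I(r_0,r)=\int_{r_0}^r(r-s)\bigl(F(s)-F(r_0)\bigr)\,ds$ that already sits on the left-hand side of the twice-integrated inequality, so that inequality takes the self-improving form $X\le\alpha+\beta\sqrt{X}$ with $X=I(r_0,r)/r^2$; this bounds $X$ along the subsequence and, since $I(r_0,r)/r^2\to(\sup F-F(r_0))/2$, proves $\sup F<\infty$ whenever $L<\infty$. Your split has no such property, because the weight $(r-s)^2$ vanishes at $s=r$ and the left-hand side therefore does not control $F(r)-F(r_0)$.

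This matters because your proposed treatment of the case $M:=\sup F=\infty$ is the one genuine gap, and the patch you sketch does not close it. Integrating from $0$ and using the cruder bound $\sqrt{A(r)F(r)}$ gives, for each fixed $\lambda\in(0,1)$ and along $r_k$ with $A(r_k)/r_k^2\to L$, only
\[
\tfrac{1}{2}(1-\lambda)^2\,F(\lambda r_k)\;\le\;aL+b\sqrt{L}\,\sqrt{F(r_k)}+\tfrac{c}{2}+o(1),
\]
because the monotonicity lower bound for $\int_0^r(r-s)F(s)\,ds$ can only produce values of $F$ at points strictly to the left of $r$; it never puts $M$ itself on the left-hand side. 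This inequality is consistent with, say, $F(r)=e^{r}$ and $\lambda=\tfrac12$, so no relation of the form $M\le C_1L+C_2\sqrt{LM}+C_3$ with finite constants is actually obtained, and $M<\infty$ is not forced. Since your main argument genuinely needs $M<\infty$ to make $\sqrt{L\,(M-F(r_0))}$ tend to $0$ as $r_0\to\infty$, the proof is incomplete as written. The repair is simply to use the paper's Cauchy--Schwarz split, at least for the finiteness step: the resulting quadratic inequality in $\sqrt{I(r_0,r_k)/r_k^2}$ yields $\sup F<\infty$ when $L<\infty$, after which the rest of your argument goes through verbatim.
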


\begin{proof}
In the course of the proof we wish to integrate the differential inequality twice. Let $[q,r]\subseteq \mrealpos$ be a nonempty compact interval. Define
\[
  I(q,r)\defeq \int_q^r \left( \int_q^\rho F(t)-F(q)\, dt\right) d\rho.  
\]
First, we prove that
\begin{equation}\label{eqn:ineqR}
  \int_q^r\left( \int_q^\rho (F'A')^\frac{1}{2}(t)\, dt\right) d\rho \leq \big( 2 I(q,r) A(r) \big)^\frac{1}{2}.
\end{equation}
Integration by parts \cite[2.9.24]{FEDERER} shows that for every Lebesgue-integrable function $f:[q,r]\to\mathreal$ we have
\begin{equation}\label{eqn:intbyparts}
  \int_q^r \left( \int_q^\rho f(t)\, dt\right) d\rho = \int_q^r (r-\rho)\, f(\rho)\, d\rho.
\end{equation}
We use this equation for $f\defeq (F'A')^\frac{1}{2}$ and apply the Cauchy-Schwarz inequality, to obtain
\begin{align*}
  \int_q^r\left( \int_q^\rho\, (F'A')^\frac{1}{2}(t) \,dt\right) d\rho \leq \left( \int_q^r (r-\rho)^2 F'(\rho)\,d\rho\right)^\frac{1}{2} A(r)^\frac{1}{2}.
\end{align*}
The content of the first bracket on the right hand side equals $2I(q,r)$. This follows from integration by parts, from equation \eqref{eqn:intbyparts} for $f\defeq F$, and from
\[
  2 I(q,r) = 2 \int_q^r \left( \int_q^\rho F(t)\, dt\right) d\rho \;-(r-q)^2 F(q).
\]
This proves \eqref{eqn:ineqR}.

  Integrating the differential inequality twice, we obtain
\begin{align*}
\int_q^r\left( \int_q^\rho F(t)\, dt\right) d\rho \;\leq\;& a \,\int_q^r\left( \int_q^\rho R(t)\, dt\right) d\rho  \\
+ &b\int_q^r\left( \int_q^\rho (F'A')^{\frac{1}{2}}(t)\, dt\right) d\rho \;+\; c\,\frac{(r-q)^2}{2}.
\end{align*}
On the left hand side we use the preceding equation, on the right hand side we use the assumption on $R$ and \eqref{eqn:ineqR}. It follows that
\[
  I(q,r) + \frac{F(q)}{2}r^2 \;\leq\;a\, A(r) + b \, \big( 2 I(q,r) A(r) \big)^\frac{1}{2} + \frac{c}{2}\,r^2 + \mathcal{O}(q,r)
\]
for every $0< q<r<\infty$ and for a function $\mathcal{O}(q,r)$ that satisfies\break $\lim_{r\to\infty}\mathcal{O}(q,r)/r^2=0$ for every fixed $q> 0$. We divide by $r^2$ and obtain
\begin{equation}\label{eqn:qr-ineq}
 \frac{I(q,r)}{r^2} + \frac{F(q)}{2}\;\leq\;a\,\frac{A(r)}{r^2}+  b\, \left( 2\frac{I(q,r)}{r^2}\right)^\frac{1}{2}\left( \frac{A(r)}{r^2}\right)^\frac{1}{2} + \frac{c}{2} + \frac{\mathcal{O}(q,r)}{r^2}
\end{equation}
for every $0< q<r<\infty$.

Since $F$ is non-decreasing, we have
\[
  \lim_{r\to\infty}\frac{I(q,r)}{r^2}=\frac{\sup F-F(q)}{2}.
\]
To prove the lemma, we can assume that $\liminf_{r\to\infty} A(r)/r^2$ is finite. Then \eqref{eqn:qr-ineq} implies that the preceding expression, i.e.\ $\sup F$, is finite. Now take the limit inferior $r\to\infty$ of \eqref{eqn:qr-ineq} to obtain
\[
  \frac{\sup F}{2} \;\leq\; a \liminf_{r\to\infty}\frac{A(r)}{r^2} + b \, \Big(\sup F - F(q)\Big)^\frac{1}{2} \left( \liminf_{r\to\infty}\frac{A(r)}{r^2}\right)^\frac{1}{2} + \frac{c}{2}.
\]
Since $F$ is non-decreasing and $\sup F$ is finite, we have $\lim_{q\to\infty} (\sup F-F(q))=0$. Hence, taking the limit $q\to\infty$ in the preceding inequality proves the lemma.
\end{proof}

\sectionbf{The cylinder: Construction of the exhaustion}

\noindent In the proof of Theorem~\ref{thm:l}, E.~Hopf's method is applied to compact sets exhausting $S^1\times\mathreal$. In \cite{BK}, K.~Burns and G.~Knieper used compact sets bounded by non-contractible geodesic loops. In \cite{BE11} we used sets bounded by horocycles, i.e.\ by level sets of Busemann functions, and this led to better estimates. Here we briefly describe the construction of the exhaustion that we will use in the proof of Theorem~\ref{thm:l}; it is similar to the exhaustion in \cite{BE11}.

\begin{definition}[{\cite[(22.3)]{BUSEMANN}}]
  Let $\gamma$ be a ray in a complete Riemannian manifold $M$. Its \emph{Busemann function} $b_\gamma\colon M\to\mathreal$ is defined by
  \[
   b_\gamma(p) \defeq \lim\limits_{t\to\infty}\big(\dist(p,\gamma(t))-t\big).
  \]
\end{definition}

Busemann functions are examples of distance functions as defined in \cite{GROVE}, see also \cite[Section 2]{BE11}. Distance functions are Lipschitz, and not $C^1$ in general, but there is a notion of regularity for them. It has similar consequences as the usual one. In particular, all level sets of a proper and regular distance function are homeomorphic, see \cite[Proposition~1.8]{GROVE}.

\begin{proposition}[{\cite[Proposition 3.2]{BE11}}]\label{prp:regular}
  Let $C=S^1\times\mathreal$ be a complete Rie\-mannian cylinder without conjugate points. If $\gamma\colon\mrealnneg\to C$ is a ray such that
  \[
    \liminf_{t\to\infty}\frac{1}{t} \, l(\gamma(t))<1
  \]
then $b_\gamma$ is a proper and regular distance function, and each of its level sets is homeo\-morphic to $S^1$ and generates the fundamental group of $C$.
\end{proposition}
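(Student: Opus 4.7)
The plan is to lift to the universal cover $\tilde C\cong\mathreal^2$ and exploit the interplay between $b_\gamma$ and the lifts of $\gamma$ under the deck group, with the subliner loop hypothesis ruling out escape of the horocycles to the wrong end. That $b_\gamma$ is a well-defined $1$-Lipschitz distance function is standard: $t\mapsto\dist(p,\gamma(t))-t$ is non-increasing by the triangle inequality and bounded below by $-\dist(p,\gamma(0))$. So everything reduces to proving properness, proving regularity, and then computing the topology of the level sets.

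For \emph{properness}, fix a generator $\tau$ of the deck group, lift $\gamma$ to a ray $\tilde\gamma$ in $\tilde C$, and observe that
\[
  b_\gamma(p)=\lim_{t\to\infty}\min_{k\in\mathinteger}\bigl(\dist_{\tilde C}(\tilde p,\tau^k\tilde\gamma(t))-t\bigr).
\]
Since $\gamma$ is an injective ray, distinct lifts $\tau^k\tilde\gamma$ are pairwise disjoint, and no-conjugate-points makes them non-crossing minimizing rays in $\tilde C$, thereby partitioning the plane into strips. For $\tilde p$ separated from $\tilde\gamma$ by several strips, a shortest path to $\tau^k\tilde\gamma(t_i)$ must traverse all the intermediate strips, and the hypothesis $\dist_{\tilde C}(\tilde\gamma(t_i),\tau\tilde\gamma(t_i))\le(1-\epsilon)t_i$ still leaves a linear gap between that length and $t_i$, since the "transverse width" of the strips is bounded below by a positive constant while the number of strips crossed grows with the location of $\tilde p$. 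This forces $b_\gamma\to+\infty$ on the end opposite to $\gamma$ (the end approached by $\gamma$ is trivial since $b_\gamma(\gamma(s))=-s$), so every level set is compact. For \emph{regularity}, at each $p$ I would construct a co-ray $\alpha_p$ asymptotic to $\gamma$ as an Arzel\`a--Ascoli limit of minimizing segments from $p$ to $\gamma(t_i)$; along $\alpha_p$ the Busemann function decreases at unit speed, and no-conjugate-points gives continuous dependence of the initial tangent on $p$, which yields Grove's regularity condition uniformly on compact sets.

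Having established both, Proposition~1.8 of \cite{GROVE} supplies homeomorphisms between all level sets. Choosing $c$ so negative that $b_\gamma^{-1}(c)$ lies in a small neighborhood of the end approached by $\gamma$, the compact level set must wrap once around the cylinder (it cannot bound a disk since $b_\gamma$ has no critical values and hence no local extrema on $C$), so it is a simple closed curve generating $\pi_1(C)\cong\mathinteger$. The main obstacle is the properness step: the hypothesis only yields subsequential control of $l(\gamma(t))$, and converting this into uniform linear growth of $b_\gamma$ in the transverse direction requires carefully exploiting both the non-crossing structure of the lifts and the monotonicity built into the definition of the Busemann function.
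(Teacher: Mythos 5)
First, note that the paper does not actually prove this statement: it is quoted verbatim from \cite[Proposition~3.2]{BE11}, so you are reconstructing an argument that occupies a substantial part of that earlier paper. Measured against that, your sketch has a genuine gap precisely where the hypothesis $\liminf_{t\to\infty}l(\gamma(t))/t<1$ has to do its work, namely in the properness step, and your effort there is aimed at the wrong end. On the end \emph{opposite} to $\gamma$ no hypothesis is needed at all: fix any simple non-contractible loop $\Gamma$ through $\gamma(0)$; it separates $C$, every path from a point $p$ beyond $\Gamma$ to $\gamma(t)$ must cross it, so $d(p,\gamma(t))-t\ge d(p,\Gamma)+d(\Gamma,\gamma(t))-t\ge d(p,\Gamma)-\operatorname{length}(\Gamma)/2$, and $b_\gamma\to+\infty$ there automatically. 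The genuine difficulty is the end approached by $\gamma$, which you dismiss as ``trivial'': there one must show $b_\gamma\to-\infty$ \emph{uniformly}, not merely along $\gamma$ itself. This is exactly what fails for a hyperbolic cylinder (where $l(\gamma(t))/t\to2$): $b_\gamma$ is not proper because its level sets escape non-compactly into that end. The short loops $\Gamma_i$ at $\gamma(t_i)$ with $\operatorname{length}(\Gamma_i)\le(1-\epsilon)t_i$ yield $b_\gamma(p)\le d(p,\Gamma_i)-\tfrac{1+\epsilon}{2}t_i$ for every $p$, but since the hypothesis is only a $\liminf$ the $t_i$ may be arbitrarily sparse, and a point deep in that end need not be close to any $\Gamma_i$ on the scale of $t_i$; bridging these gaps is the real content of the proof. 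Your supporting picture is also incorrect: pairwise disjoint \emph{rays} do not separate $\mathbb{R}^2$ into strips, and there is no positive lower bound on any ``transverse width'' --- the end may be cusp-like with $l(\gamma(t))\to0$.

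The regularity step contains a second gap. You invoke ``continuous dependence of the initial tangent on $p$'' for the co-rays; that would force uniqueness of co-rays and hence $C^1$-smoothness of $b_\gamma$, neither of which follows from the absence of conjugate points --- the paper itself stresses that these distance functions are not $C^1$ in general. What Grove's regularity actually requires is that all co-ray directions at $p$ lie in an open half-plane of $T_pC$, i.e.\ one must rule out nearly opposite co-rays from the same point, and this again uses the loop hypothesis together with the topology of the cylinder rather than following from general principles. By contrast, your final step is sound: once properness and regularity are known, \cite[Proposition~1.8]{GROVE} and the product structure it provides force each level set to be a single circle generating $\pi_1(C)$.
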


Now assume that $C=S^1\times\mathreal$ is a complete Riemannian cylinder without conjugate points such that both ends of $C$ open less than linearly. Choose a minimal geodesic $\gamma\colon\mathreal\to C$ that joins the two ends, see \cite[p.\ 630]{BK}. Consider the two rays $\gamma_1(t)\defeq\gamma(-t)$ $(t\geq 0)$ and $\gamma_2(t)\defeq\gamma(t)$ $(t\geq 0)$ of $\gamma$. Then $\gamma_1$ and $\gamma_2$ converge to the different ends of $C$. Our exhaustion will be constructed from the Busemann functions $b_{\gamma_1}$ and $b_{\gamma_2}$. Since both ends of $C$ open less than linearly, the assumption of Proposition~\ref{prp:regular} is satisfied for the rays $\gamma_1$ and $\gamma_2$. Since $\gamma$ is minimal we have
\[
  b_{\gamma_1}^{-1}((-\infty,0))\cap b_{\gamma_2}^{-1}((-\infty,0))=\emptyset.
\]

\begin{definition}
Let $H^0$ be the complement of $b_{\gamma_1}^{-1}((-\infty,0))\cup b_{\gamma_2}^{-1}((-\infty,0))$ in $C$. For $i\in\{1,2\}$ and every $t>0$, we define the sets
\[
H^i_t\defeq b_{\gamma_i}^{-1}([-t,0)),\qquad  h^i_t\defeq b_{\gamma_i}^{-1}(-t),
\]
and the functions
\[
H_i(t)\defeq A(H^i_t),\qquad  h_i(t)\defeq\mhdf^1(h^i_t).
\]
\end{definition}

Here the notation is chosen so that $\gamma_i(t)\in h^i_t$ for $i\in\{1,2\}$ and $t > 0$.

\begin{proposition}\label{prp:exhaustion}
For every pair of positive real numbers $(r_1,r_2)$, the set $H^1_{r_1}\cup H^0\cup H^2_{r_2}$ is a compact subcylinder of $C$ with strong Lipschitz boundary $h^1_{r_1}\cup h^2_{r_2}$. For $i\in\{1,2\}$ and every $r> 0$ we have
\begin{equation}\label{eqn:equidist}
    h^i_t=\{p\in H^i_r \colon d(p,h^i_r)=r-t\} \qquad (0<t<r),
\end{equation}
  i.e.\ the horocycles $h^i_t$ are ``inner equidistants'' of $h^i_r\subseteq \partial H^i_r$. For $i\in\{1,2\}$, $h_i\colon\mrealpos\to\mrealnneg$ is continuous, and $H_i\in C^1(\mrealpos,\mrealpos)$ satisfies $H_i'=h_i$.
\end{proposition}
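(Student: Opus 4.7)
The plan is to apply Proposition~\ref{prp:regular} to both rays $\gamma_1$ and $\gamma_2$ to obtain that each $b_{\gamma_i}$ is a proper regular distance function whose level sets are circles generating $\pi_1(C)$. Since $\gamma$ is minimal, the two open sets $b_{\gamma_i}^{-1}((-\infty,0))$ are disjoint (as already recorded in the excerpt), so
\[
H^1_{r_1} \cup H^0 \cup H^2_{r_2} = b_{\gamma_1}^{-1}([-r_1,\infty)) \cap b_{\gamma_2}^{-1}([-r_2,\infty)).
\]
Properness of both Busemann functions makes this intersection compact, and the topological description of the level sets from Proposition~\ref{prp:regular} identifies it as the compact subcylinder bounded by the two generating circles $h^1_{r_1}$ and $h^2_{r_2}$. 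For the strong Lipschitz claim I would appeal to the general principle (already used in \cite{BE11} for essentially this configuration, and ultimately resting on the regularity theory of \cite{GROVE} and \cite{BANGERT81}) that a regular level set of a regular distance function is a strong Lipschitz submanifold.

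For the equidistant identity \eqref{eqn:equidist} I would argue in two steps. The $1$-Lipschitz property of $b_{\gamma_i}$ gives the easy bound $d(p,h^i_r)\ge r-t$ for every $p\in h^i_t$. For the reverse bound I would invoke an asymptotic ray $\sigma_p$ to $\gamma_i$ starting at $p$: along $\sigma_p$ the Busemann function $b_{\gamma_i}$ decreases at unit speed, so $\sigma_p(r-t)\in h^i_r$ and $d(p,h^i_r)\le r-t$. Applying the same two-sided estimate to a general $p\in H^i_r$ with $b_{\gamma_i}(p)=-s\in(-r,0)$ shows $d(p,h^i_r)=r-s$, so the set on the right of \eqref{eqn:equidist} coincides with $h^i_t$.

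For the coarea-type claim I would apply the coarea formula of \cite[3.2.11]{FEDERER} to the $1$-Lipschitz function $b_{\gamma_i}$, together with the standard property $|\nabla b_{\gamma_i}|=1$ almost everywhere, to get
\[
H_i(r) = A(H^i_r) = \int_0^r h_i(t)\,dt,
\]
so $H_i$ is locally absolutely continuous with $H_i'=h_i$ almost everywhere. The remaining delicate point, and the step I expect to be the main obstacle, is continuity of $h_i$, which upgrades the identity above to $H_i\in C^1$. I would deduce it from \eqref{eqn:equidist} combined with the strong Lipschitz structure of $h^i_r$: the asymptotic rays emanating from $h^i_r$ realize a uniformly $1$-Lipschitz deformation of $h^i_r$ onto each $h^i_t$ with $t<r$, and this should give continuity of the length function $t\mapsto\mathcal{H}^1(h^i_t)$ via a matching lower- and upper-semicontinuity argument.
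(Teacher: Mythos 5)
Most of your argument runs along the same lines as the paper's proof: compactness and the identification of the subcylinder come from Proposition~\ref{prp:regular} and the disjointness of the two sublevel sets; the strong Lipschitz claim is delegated to the regularity theory of distance functions; and $H_i'=h_i$ follows from the coarea formula and $|\operatorname{grad}b_{\gamma_i}|=1$ a.e. Your two-sided proof of \eqref{eqn:equidist} is also essentially the paper's: the co-ray argument for $d(p,h^i_r)\le r-t$ is just a proof of the identity $b_{\gamma_i}(p)=d(p,b_{\gamma_i}^{-1}(-r))-r$, which the paper cites directly from \cite[(22.18)]{BUSEMANN}.

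The genuine gap is the continuity of $h_i$, exactly the step you flag as the main obstacle, and your proposed mechanism would not close it. First, the direction is wrong: a co-ray to $\gamma_i$ decreases $b_{\gamma_i}$ at unit speed, so it moves from $h^i_t$ \emph{towards} $h^i_r$ when $t<r$; the co-rays emanating from $h^i_r$ land on $h^i_s$ with $s>r$, outside $H^i_r$. There is no canonical map from $h^i_r$ onto $h^i_t$ with $t<r$ obtained by "following asymptotic rays'': running co-rays backwards is not well defined, a point may lie on several co-rays or on none issued from $h^i_r$, and even the forward map $p\mapsto\sigma_p(r-t)$ is neither continuous nor $1$-Lipschitz in general. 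Second, even granting a genuine $1$-Lipschitz surjection between two horocycles, this would only yield a one-sided inequality between their $\mhdf^1$-measures (Lipschitz maps do not increase Hausdorff measure), i.e.\ at best a semicontinuity statement; upper semicontinuity of the length of inner equidistants is precisely what fails for general curves, since arcs can disappear into the cut locus. The paper obtains both the strong Lipschitz structure of the $h^i_t$ and the continuity of $t\mapsto\mhdf^1(h^i_t)$ from \cite[Proposition~2.1]{BE11}, a general statement about level sets of proper \emph{regular} distance functions whose proof rests on the local bi-Lipschitz normal form near a regular point in the sense of \cite{GROVE}, not on the horocycle geometry. You should cite that result (or reproduce its proof); the asymptotic-ray deformation is not a substitute.
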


\begin{proof}
  Let $i\in\{1,2\}$. By Proposition \ref{prp:regular}, $b_{\gamma_i}$ is a proper and regular distance function and its level sets $h^i_t$, $t\in\mathreal$, are circles that generate the fundamental group of $C$. For every $r\in\mathreal$ and every $p\in C$ such that $b_{\gamma_i}(p)\geq r$ we have
\[
  b_{\gamma_i}(p)= d(p,b_{\gamma_i}^{-1}(r))+r,
\]
 which is a general fact for Busemann functions, and follows from \cite[(22.18)]{BUSEMANN}. Hence, for every $r>0$ and every $p\in H^i_r$, we have
\[
  b_{\gamma_i}(p)=d (p,h^i_r)-r.
\]
This implies \eqref{eqn:equidist}.  From \cite[Proposition~2.1]{BE11} it follows that the $h^i_t$ are strong Lipschitz submanifolds of $C$ and that $h_i$ is continuous. Since $b_{\gamma_i}$ is a distance function, it satisfies $|\operatorname{grad} b_{\gamma_i}|=1$ almost everywhere. Hence $H_i'=h_i$ by the coarea formula \cite[3.2.11]{FEDERER} applied to the function $b_{\gamma_i}$.
\end{proof}

Note that so far we used only that the assumption of Proposition \ref{prp:regular} is satisfied for the rays $\gamma_1$ and $\gamma_2$, and not the stronger assumption that both ends of the cylinder open less than linearly. The latter one now has the following effect on the growth of the area functions $H_i$, which will be used in the proof of Theorem~\ref{thm:l}.

\begin{lemma}\label{lem:subqH}
  If $\gamma_i$ converges to an end $\mathcal{E}$ that opens less than linearly, then
\[
  \liminf_{r\to\infty}\frac{H_i(r)}{r^2}=0.
\]
\end{lemma}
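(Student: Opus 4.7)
The plan is to translate the short-loop hypothesis at $\mathcal{E}$ into a small-horocycle estimate along a sequence of levels, and then to integrate using the equidistant structure of the horocyclic exhaustion to bound the area.

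The hypothesis provides a sequence $(p_j)$ in $C$ converging to $\mathcal{E}$ with $\ell_j := l(p_j)$ satisfying $\ell_j/\dist(p_j,p_0) \to 0$. Set $r_j := -b_{\gamma_i}(p_j)$. Because $b_{\gamma_i}$ is a proper distance function by Proposition~\ref{prp:regular} and $\gamma_i$ converges to $\mathcal{E}$, one has $r_j \to \infty$. Since $b_{\gamma_i}$ is $1$-Lipschitz, $r_j \leq \dist(p_j,p_0) + |b_{\gamma_i}(p_0)|$, so $\ell_j/r_j \to 0$ and $p_j \in h^i_{r_j}$.

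The key step is a short-loop/horocycle comparison bounding $h_i(r_j) \leq \ell_j(1+o(1))$ along the sequence. Let $\alpha_j$ be a shortest non-contractible loop at $p_j$, of length $\ell_j$. Since $b_{\gamma_i}$ is $1$-Lipschitz and $\alpha_j$ is a loop, the oscillation of $b_{\gamma_i}$ along $\alpha_j$ is at most $\ell_j/2$, so $\alpha_j$ lies in the horocyclic slab $b_{\gamma_i}^{-1}([-r_j-\ell_j/2,-r_j+\ell_j/2])$. I plan to project $\alpha_j$ onto the circle $h^i_{r_j}$ along the geodesic gradient flow of $b_{\gamma_i}$; the projected curve is non-contractible in $h^i_{r_j}$ (it represents a generator of $\pi_1(C)$), hence has length at least $h_i(r_j)$. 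A Riccati-based estimate using Lemma~\ref{lem:exu} and the absence of conjugate points controls the length expansion of this projection over the short flow times of order $\ell_j = o(r_j)$ involved in traversing the slab, yielding $h_i(r_j) = o(r_j)$.

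By Proposition~\ref{prp:exhaustion}, $H_i(r) = \int_0^r h_i(t)\,dt$. Using that $h_i$ is non-decreasing along a regular Busemann foliation in a surface without conjugate points (a consequence of the Riccati equation for the horocyclic normal field, cf.\ \cite{BE11}), one obtains $H_i(r_j) \leq r_j\cdot h_i(r_j)$, hence $H_i(r_j)/r_j^2 \leq h_i(r_j)/r_j \to 0$, which proves the claim. The main obstacle is the short-loop/horocycle comparison: since the Riccati solution $U$ is only locally bounded, the multiplicative distortion of the flow-projection must be controlled by exploiting the thinness of the slab and the shortness of the relevant flow times. This is precisely the point at which the ``opens less than linearly'' hypothesis is genuinely used.
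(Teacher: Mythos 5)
Your proposal diverges from the paper's argument and, as it stands, has two genuine gaps. The paper does not attempt to control the lengths $h_i(r_j)$ of individual horocycles at all; instead it shows that $H^i_{r_j}$ (with $r_j=s_j-l(p_j)$, $p_j=\gamma_i(s_j)$) is contained in the compact subcylinder bounded by a fixed non-contractible loop $\Gamma_0$ and the shortest loop $\Gamma_j$ at $p_j$, and then invokes the area bound \cite[Proposition~7.2]{BE11} for subcylinders bounded by two non-contractible loops, giving $H_i(r_j)\leq\frac{8}{\pi}(s_j+L_j)L_j$ directly. Your route requires the pointwise estimate $h_i(r_j)\leq l(p_j)(1+o(1))$, which is strictly stronger than anything the paper establishes, and the mechanism you propose for it does not work: the gradient flow of $b_{\gamma_i}$ is not defined (the Busemann function is only Lipschitz and its level sets are only strong Lipschitz circles), and even in the smooth case the length distortion of the normal flow over a slab of width $\sim l(p_j)$ is of order $\exp\bigl(\int u\bigr)$ for a Riccati solution $u$ that is merely \emph{locally} bounded, with no uniform bound near the end. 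Thinness of the slab does not compensate: if $u$ is of size $M/l(p_j)$ there, the distortion is $e^{M}$ with $M$ uncontrolled. This is exactly the oscillation problem the introduction warns about, and you have identified it as ``the main obstacle'' without resolving it.

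The second gap is the monotonicity claim for $h_i$. It is not true that $h_i$ is non-decreasing on a surface without conjugate points: for a ray entering a hyperbolic cusp (curvature $-1$, no conjugate points, an end that opens less than linearly) the horocycle lengths decay exponentially, so $h_i$ is strictly decreasing; in general the sign of $h_i'$ is governed (in the smooth case) by $2\pi-\omega_i(t)$, whose sign is not controlled, and the Bol--Fiala inequality of Lemma~\ref{lem:71} only bounds $h_i$ from \emph{below}. Without monotonicity, knowing $h_i(r_j)=o(r_j)$ along a sequence gives no control on $H_i(r_j)=\int_0^{r_j}h_i(t)\,dt$, since $h_i$ may oscillate between the sampled levels. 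So even if your key comparison step were repaired, the final integration step would still fail. The fix is to abandon pointwise control of $h_i$ and bound the area of $H^i_{r_j}$ directly by trapping it between two short non-contractible loops, as the paper does.
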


\begin{proof}
Since $\mathcal{E}$ opens less than linearly, there exists a sequence $(p_j)$ in $C$ converging to $\mathcal{E}$ such that
\[
  \lim_{j\to\infty}\frac{l(p_j)}{d(p_j,p_0)}=0.
\]
Shortest non-contractible loops on $C$ are simple and generate the fundamental group of $C$, see \cite[Remark 3.1]{BE11}. Since the ray $\gamma_i$ and the sequence $(p_j)$ converge to the same end, a shortest non-contractible loop with basepoint $p_j$ intersects the ray $\gamma_i$, for all but possibly finitely many $j$. Hence  we can assume that $p_j=\gamma_i(s_j)$ for a sequence $s_j\to\infty$. In addition, we can assume $p_0=\gamma_i(0)$, so that $s_j=\dist (p_j,p_0)$ for every $j$.

For every $j\ge 1$ we choose a shortest non-contractible loop $\Gamma_j$ based at $p_j$, and let $r_j\defeq s_j-l(p_j)$. We claim that, for every $j\geq 1$, we have
\[
  \Gamma_j\subseteq b_{\gamma_i}^{-1}((-\infty,-r_j)).
\]
To prove this, note, that at the basepoint of $\Gamma_j$ we have $b_{\gamma_i}(p_j)=-s_j$, that $b_{\gamma_i}$ is $1$-Lipschitz, and that $\operatorname{diam}(\Gamma_j)\leq l(p_j)/2$. Choose a non-contractible, simple, piecewise $C^1$-regular loop $\Gamma_0$ based at $p_0$ that is contained in $b_{\gamma_i}^{-1}([0,\infty))$. It follows that, for every $j$ such that $r_j> 0$, the set $H^i_{r_j}=b_{\gamma_i}^{-1}([-r_j,0))$ is contained in the compact subcylinder that is bounded by $\Gamma_0$ and $\Gamma_j$. We use \cite[Proposition~7.2]{BE11} to estimate the area of this subcylinder and hence the area of the included set $H^i_{r_j}$ by
\[
  H_i(r_j)\leq\frac{8}{\pi} (s_j+ L_j) L_j
\]
for $L_j\defeq \frac{1}{2}\big( \operatorname {length}(\Gamma_0)+l(p_j)\big)$. The condition on the sequence $(p_j)$ implies $\lim_{j\to\infty} l(p_j)/s_j =0$. It follows that $\lim_{j\to\infty}r_j/s_j=1$ and then, from the preceding inequality, that $\lim_{j\to\infty} H_i(r_j) / (r_j)^2 =0$. 
\end{proof}

\sectionbf{The cylinder: The lengths of inner equidistants}

\noindent{}We continue to assume that the rays $\gamma_1$, $\gamma_2$ and the associated exhaustion of $C$ are as in the previous section.

Let $i\in\{1,2\}$. For technical reasons we choose an isometric embedding of the half cylinder $b_{\gamma_i}^{-1}((-\infty,0))$ into some complete Riemannian $\mathreal^2$. In this way we attach a closed Riemannian disk $D$ to the half cylinder $b_{\gamma_i}^{-1}((-\infty,0))$. It follows that the union of $H^i_t=b_{\gamma_i}^{-1}([-t,0))$ and $D$ is a closed disk with boundary $h^i_t$, for every $t > 0$. Let $\omega(B)\defeq \int_B K\, d\area$ denote the integral of the Gaussian curvature of a Borel subset $B\subseteq\mathreal^2$. Define
\[
  \omega_i(t)\defeq \omega (D\cup H^i_t)  
\]
for every $t> 0$. The Gauss-Bonnet theorem \cite[2.1.5]{BZ} implies that $\omega_i(t)$ is independent of the geometry of the disk $D$. 

If $b_{\gamma_i}$ is smooth, then  $2\pi - \omega_i(t)$ equals the rotation of $h^i_t=\partial (D\cup H^i_t)$, i.e.\ the integral of its geodesic curvature with respect to the inward pointing normal. By \eqref{eqn:equidist}, the $h^i_t$ are inner equidistants of $h^i_r=\partial(D\cup H^i_r)$ for every $0<t<r$. Hence in the smooth case we have
\[
  h_i'(t)=2\pi-\omega_i(t).
\]
This equation is no longer true if $b_{\gamma_i}$ is not smooth, see \cite[p.~791]{BE11}. In the general case one has the following inequality that goes back to work by G. Bol \cite{BOL} and F.~Fiala \cite{FIALA}. The proof given below consists in applying \cite[3.2.3]{BZ} to our situation.

\begin{lemma}\label{lem:71}
For $i\in\{1,2\}$ and every $r > 0$ we have
  \[
    h_i(r)\geq \int_0^r 2\pi - \omega_i(t)\, dt.
  \]
\end{lemma}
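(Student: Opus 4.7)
The strategy is to recognize the claimed inequality as the classical Bol--Fiala estimate for inner equidistants, applied to the closed Riemannian disk $\Omega \defeq D \cup H^i_r$ in the auxiliary complete Riemannian plane $\mathreal^2$, whose boundary is the strong Lipschitz Jordan curve $h^i_r$. By equation \eqref{eqn:equidist} in Proposition~\ref{prp:exhaustion}, every $p \in H^i_r$ satisfies $\dist(p, h^i_r) = r + b_{\gamma_i}(p)$, so for each $s \in (0,r)$ the inner sub-disk $\Omega_s \defeq \{p \in \Omega : \dist(p, h^i_r) \ge s\}$ coincides with $D \cup H^i_{r-s}$: the part inside the half cylinder is $H^i_{r-s}$ by \eqref{eqn:equidist}, while $D$ lies on the far side of $h^i_0$ from $h^i_r$, so every $p \in D$ satisfies $\dist(p, h^i_r) \ge r \ge s$. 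Consequently $\omega(\Omega_s) = \omega_i(r-s)$, and the equidistant $\partial \Omega_s = h^i_{r-s}$ has length $L(s) \defeq \mhdf^1(\partial\Omega_s) = h_i(r-s)$.

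With this dictionary in place, \cite[3.2.3]{BZ} delivers, for all $0 \le s_1 \le s_2 < r$,
\[
  L(s_2) - L(s_1) \;\le\; -\int_{s_1}^{s_2}\bigl(2\pi - \omega(\Omega_\tau)\bigr)\,d\tau.
\]
Taking $s_1 = 0$, using $L(s_2) \ge 0$, and then letting $s_2 \nearrow r$ yields $L(0) \ge \int_0^{r}\bigl(2\pi - \omega(\Omega_\tau)\bigr)\,d\tau$; the substitution $t = r - \tau$, together with $\omega(\Omega_\tau) = \omega_i(r-\tau) = \omega_i(t)$, converts the right-hand side to $\int_0^r\bigl(2\pi - \omega_i(t)\bigr)\,dt$, giving the required
\[
  h_i(r) \;=\; L(0) \;\ge\; \int_0^r \bigl(2\pi - \omega_i(t)\bigr)\,dt.
\]
The limit passage is harmless because $\omega_i$ is bounded on $(0,r]$: the Gaussian curvature is smooth on $C$ and $D$ is a fixed compact piece.

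The principal obstacle, and the reason this is not entirely trivial, is verifying that our setting meets the hypotheses of \cite[3.2.3]{BZ}. The ambient metric on $\mathreal^2$ is smooth, so Gaussian curvature is locally integrable; the curve $h^i_r$ is a strong Lipschitz Jordan curve of finite length with bounded total rotation (by Proposition~\ref{prp:regular} together with \cite[Proposition~2.1]{BE11}); and Proposition~\ref{prp:exhaustion} identifies the horocycles $h^i_t$ with the geometric inner equidistants of $h^i_r$ and provides the continuity of $L$ via $H_i' = h_i$. Once these checks are in place, Bol--Fiala goes through and produces the inequality, the gap from equality reflecting exactly the \emph{corner} defects of the non-smooth equidistants $h^i_t$, which the Bol--Fiala method is designed to capture.
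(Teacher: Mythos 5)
Your proof is essentially the paper's own: it identifies $D\cup H^i_{r-s}$ as the inner equidistant sets of $G=D\cup H^i_r$ via \eqref{eqn:equidist}, applies the Bol--Fiala estimate in the form of \cite[3.2.3]{BZ}, and discards the nonnegative length of the innermost equidistant. The one point where your justification differs in substance is the applicability of \cite[3.2.3]{BZ}: that lemma is stated for polyhedra, so the paper first reduces to the polyhedral case by the approximation theorem \cite[3.1.1]{BZ} (with details deferred to the appendix of \cite{BE11}), a reduction that your list of hypothesis checks (smooth metric, finite length, bounded rotation) does not capture.
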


Note that the direction of this inequality is compatible with inequality \eqref{eqn:diffineq-cyl} below. This is due to our choice of exhaustion.

\begin{proof}[Proof of Lemma~\ref{lem:71}]
Let $G\defeq D\cup H^i_r$. Then $G$ is a subset of a complete Riemannian plane, $G$ is homeomorphic to a closed disk and has strong Lipschitz boundary. For $0< t < r$ let
\[
  P_t\defeq \{p\in G\colon d(p,\partial G)<t\},\qquad l_t\defeq \{p\in G\colon d(p,\partial G)=t\}.
\]
By \eqref{eqn:equidist} we have, for every $0<t<r$,
\[
  H^i_t=\bigcup_{0< s\leq t} h^i_s = \{ p\in H^i_r\colon d(p,h^i_r)\geq r-t\}.
\]
Thus $G$ is the disjoint union of $D\cup H^i_t$ and $P_{r-t}$, for every $0< t < r$. Hence
\[
  \omega_i(t)+\omega(P_{r-t})=2\pi - \tau,
\]
for $\tau\defeq 2\pi - \omega(G)$. Note that $\tau$ is the rotation of $\partial G$ as defined in \cite[2.1.5]{BZ}. Together with $l_0=h^i_r$ this implies that the inequality in Lemma~\ref{lem:71} is equivalent to the inequality
\[
  \mhdf^1(l_0) \geq \int_0^r \omega( P_t) + \tau \, dt.
\]
According to the approximation theorem \cite[3.1.1]{BZ} it suffices to prove the preceding inequality in case $G$ is a polyhedron as in \cite[3.1]{BZ}. In this case an application of lemma \cite[3.2.3]{BZ} shows that
\[
  \mhdf^1 (l_r) - \mhdf^1(l_0) \leq \int_0^r -\omega(\cl{P_t}) -\tau \, dt.
\]
\end{proof}

The reader may consult the appendix to \cite{BE11} for details on the approximation and for the application of lemma \cite[3.2.3]{BZ}.

\sectionbf{The cylinder: Proof of Theorem~\ref{thm:l}}

\noindent{}The proof of Theorem~\ref{thm:l} is similar to the proof of Theorem~\ref{thm:plane}. Some additional arguments, provided in Sections~5 and 6, are necessary.

\begin{proof}[Proof of Theorem~\ref{thm:l}]
Let $U\colon T^1C\to\mathreal$ be as in \eqref{eqn:exU}.

We apply E.~Hopf's method to an exhaustion by compact subcylinders as in Proposition~\ref{prp:exhaustion}. By Proposition~\ref{lem:keylem} we have
\[
 \int_{\sigma^{-1}(H^1_{r_1}\cup H^0\cup H^2_{r_2})} U^2 \,d\mlle \leq -2\pi \int_{H^1_{r_1}\cup H^0\cup H^2_{r_2}} K \,dA + \int_{\sigma^{-1}(h^1_{r_1}\cup h^2_{r_2})} |U|\,d\mllb
\]
for every $(r_1,r_2)\in \mrealpos\times \mrealpos$. We estimate the boundary terms, using the Cauchy-Schwarz inequality and the local product structure of $\mllb$, by
\[
   \int_{\sigma^{-1}(h^i_r)} |U|\,d\mllb \leq \left(\int_{\sigma^{-1}(h^i_r)}U^2\, d\mllb  \cdot 2\pi \mhdf^1(h^i_r)\right)^\frac{1}{2}
\]
for every $i\in\{1,2\}$ and every $r>0$. 

Recall that $H_i(r)=A(H^i_r)$ and $h_i(r)=\mhdf^1(h^i_r)$, and that $H_i'=h_i$ by Proposition~\ref{prp:exhaustion}. The coarea formula \cite[3.2.12]{FEDERER} in $T^1C$ implies that
\[
F_i(r)\defeq\int_{\sigma^{-1}(H^i_r)} U^2\, d\mlle=\int_0^r  \left(\int_{\sigma^{-1}(h^i_t)} U^2\, d\mllb \right) dt.
\]
The functions $F_i:\mrealpos\to\mrealnneg$ are non-decreasing, and locally absolutely continuous by \cite[2.9.20]{FEDERER}. Let $F_0\defeq\int_{\sigma^{-1}(H^0)} U^2\, d\mlle$.

We combine the preceding two inequalities, and obtain
\[
  F_0 + \sum_{i\in\{1,2\}} F_i({r_i})\leq -2\pi\, \int_{H^1_{r_1}\cup H^0\cup H^2_{r_2}} K\, dA +\sum_{i\in\{1,2\}} \big(F_i'(r_i)\cdot 2\pi H_i'(r_i)\big)^\frac{1}{2}
\]
for almost every $({r_1},{r_2})\in \mrealpos\times \mrealpos$. We claim that, for $\omega_i$ as in Section~6,
\[
\int_{H^1_{r_1}\cup H^0\cup H^2_{r_2}} K\, dA=\omega_1({r_1})  +  \omega_2({r_2})-4\pi.
\]
for every $(r_1,r_2)\in\mrealpos\times\mrealpos$. To prove this, choose a smooth, simple closed, non-contractible curve $c$ that is contained in the interior of the compact subcylinder $H^1_{r_1}\cup H^0\cup H^2_{r_2}$. Choose a Riemannian metric on $S^2$ such that a neighborhood of its equator is isometric to a neighborhood of $c$. Now cut the subcylinder along $c$, and attach the appropriate hemisphere of the $S^2$ to each of the halves, according to the chosen isometry. This process increases the curvature integral by $2\pi\chi(S^2)=4\pi$. Together with the definitions of the $\omega_i$ this proves the equation.

We combine the two expressions, and obtain
\begin{equation}\label{eqn:diffineq-cyl}
  F_0 + \sum_{i\in\{1,2\}} F_i({r_i})\leq 2\pi \sum_{i\in\{1,2\}} \big(2\pi - \omega_i(r_i)\big) +\sqrt{2\pi} \sum_{i\in\{1,2\}} \big(F_i'H_i'\big)^\frac{1}{2}(r_i) 
\end{equation}
for almost every $(r_1,r_2)\in\mrealpos\times\mrealpos$. From $H_i'=h_i$ and Lemma~\ref{lem:71} it follows that
\begin{equation}\label{eqn:condr-cyl}
  \int_0^r \left(\int_0^\rho 2\pi - \omega_i(t)\, dt \right) d\rho \leq H_i(r) 
\end{equation}
for $i\in\{1,2\}$ and every $r> 0$.

Considering the preceding two inequalities, we are precisely in the situation of Lemma~\ref{lem:analysis}: Fix an $r_1>0$ such that \eqref{eqn:diffineq-cyl} is valid for almost every $r_2>0$. We apply the lemma for $A=H_2$, $F=F_2$, $R=2\pi-\omega_2$, $a=2\pi$, $b=\sqrt{2\pi}$, and the constant $c$ defined by
\[
 c(r_1)\defeq 2\pi \big(2\pi-\omega_1(r_1)\big) +\sqrt{2\pi}\big(F_1'H_1'\big)^\frac{1}{2}(r_1)-F_0-F_1(r_1).
\]
According to inequality \eqref{eqn:condr-cyl} the assumption of Lemma~\ref{lem:analysis} is satisfied for $A=H_2$ and $R=2\pi-\omega_2$. Thus we obtain
\[
\sup F_2\leq 4\pi \liminf_{r\to\infty}\frac{H_2(r)}{r^2}+ c(r_1).
\]
This can be done for almost every $r_1>0$. Hence we obtain
\[
  F_1\leq 2\pi (2\pi-\omega_1) +\sqrt{2\pi}\big(F_1'H_1'\big)^\frac{1}{2} + \left(4\pi\liminf_{r\to\infty}\frac{H_2(r)}{r^2}-\sup F_2-F_0\right)
\]
almost everywhere on $\mrealpos$. As above, we apply Lemma~\ref{lem:analysis} for $A=H_1$, $F=F_1$, $R=2\pi-\omega_1$, $a=2\pi$, $b=\sqrt{2\pi}$, and $c$ equal to the value of the rightmost bracket of the preceding inequality. Thus we obtain
\[
\sup F_1\leq 4\pi \liminf_{r\to\infty}\frac{H_1(r)}{r^2}+4\pi\liminf_{r\to\infty}\frac{H_2(r)}{r^2}-\sup F_2-F_0.
\]
By definition of the $F_i$ this is equivalent to
\[
\int_{\sigma^{-1}(C)} U^2\, d\mlle \leq 4\pi \left(\liminf_{r\to\infty}\frac{H_1(r)}{r^2} + \liminf_{r\to\infty}\frac{H_2(r)}{r^2}\right).
\]
Since both ends of $C$ open less than linearly, the right hand side is zero by Lemma \ref{lem:subqH}. It follows that $U=0$ almost everywhere, and then \eqref{eqn:lie-riccati} implies that $K=0$, i.e.\ $g$ is flat.
\end{proof}

\sectionbf{The cylinder: Area growth versus length growth}

\noindent{}Here we prove the equivalence of Theorem~\ref{thm:A} and Theorem~\ref{thm:l}.

\begin{proposition}\label{prp:opening}
  Let $C=S^1\times\mathreal$ be a complete Riemannian cylinder without conjugate points. An end $\mathcal{E}$ of $C$ has subquadratic area growth if and only if it opens less than linearly.
\end{proposition}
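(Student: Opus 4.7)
The plan is to prove the two implications separately. The forward direction (opening less than linearly implies subquadratic area growth) is essentially contained in Lemma~\ref{lem:subqH}; the converse requires a ball-area lower bound and a geometric containment argument.

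For opens less than linearly implies subquadratic area growth: take a minimal ray $\gamma_i$ converging to $\mathcal{E}$ (coming from a minimal geodesic joining the two ends as in Section~5) with associated Busemann function $b_{\gamma_i}$, and set $U\defeq b_{\gamma_i}^{-1}((-\infty,0))$. This is an open neighborhood of $\mathcal{E}$ since $b_{\gamma_i}\circ\gamma_i(t)=-t$. Since $b_{\gamma_i}$ is $1$-Lipschitz, any $q\in U\cap B(p_0,r)$ satisfies $b_{\gamma_i}(q)\geq b_{\gamma_i}(p_0)-r$, so $U\cap B(p_0,r)\subseteq H^i_{r+|b_{\gamma_i}(p_0)|}$. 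The argument at the start of the proof of Lemma~\ref{lem:subqH} shows that one may replace the sequence witnessing ``opens less than linearly'' by points on the ray $\gamma_i$, and the conclusion of Lemma~\ref{lem:subqH} then gives $\liminf H_i(r)/r^2=0$, hence $\liminf A(U\cap B(p_0,r))/r^2=0$.

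For the converse I argue the contrapositive. If $\mathcal{E}$ does not open less than linearly, there exist $\delta\in(0,2)$ and a neighborhood $V$ of $\mathcal{E}$ such that $l(q)\geq\delta\,d(q,p_0)$ for all $q\in V$ with $d(q,p_0)$ sufficiently large. Choose a minimal ray $\gamma$ from $p_0$ converging to $\mathcal{E}$ and set $q_n\defeq\gamma(n)$. Since the universal cover of $C$ is simply connected and has no conjugate points, its exponential map is a diffeomorphism and so admits no geodesic loops; consequently every geodesic loop on $C$ is non-contractible and has length at least $l(q_n)$. Klingenberg's lemma then gives injectivity radius $\geq l(q_n)/2\geq\delta n/2$ at $q_n$, so $B(q_n,\delta n/2)$ is embedded and is a topological disk in $C$ without conjugate points. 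Applying Croke's lower bound for the area of such a disk yields $A(B(q_n,\delta n/2))\geq c\,\delta^2 n^2$ for a universal constant $c>0$.

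To conclude, fix an arbitrary neighborhood $U$ of $\mathcal{E}$ and choose a compact $K\subseteq C$ whose $\mathcal{E}$-component $V_\mathcal{E}$ of $C\setminus K$ lies in $U$. For $n$ large, $d(q_n,K)\geq n-C_K>\delta n/2$, so $B(q_n,\delta n/2)\cap K=\emptyset$ and by connectedness $B(q_n,\delta n/2)\subseteq V_\mathcal{E}\subseteq U$. The triangle inequality gives $B(q_n,\delta n/2)\subseteq B(p_0,r_n)$ for $r_n\defeq n(1+\delta/2)$, whence
\[
\frac{A(U\cap B(p_0,r_n))}{r_n^2}\;\geq\;\frac{c\,\delta^2}{(1+\delta/2)^2}\;>\;0.
\]
Monotonicity of $r\mapsto A(U\cap B(p_0,r))$ transfers this to $\liminf_{r\to\infty}A(U\cap B(p_0,r))/r^2>0$, contradicting subquadratic area growth. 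The main obstacle is the universal positive lower bound $A(B(q_n,\delta n/2))\geq c\delta^2 n^2$, for which I rely on Croke's inequality for areas of balls without conjugate points; a direct Busemann-function argument for the containment $B(q_n,\delta n/2)\subseteq U$ would also be delicate since Proposition~\ref{prp:regular}'s hypothesis may well fail under the contrapositive assumption, which is why I take the detour through the compact set $K$.
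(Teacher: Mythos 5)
Your proof is correct in outline, and for the harder implication it takes a genuinely different route from the paper. For ``opens less than linearly implies subquadratic area growth'' both arguments funnel through Lemma~\ref{lem:subqH} and \cite[Proposition~7.2]{BE11}; the paper avoids Busemann functions here by taking $U$ to be the component of $C\setminus\Gamma_0$ determined by $\mathcal{E}$ and trapping $U\cap B(p_0,r_j)$ directly in the subcylinder between $\Gamma_0$ and $\Gamma_j$, whereas you keep the sublevel sets $H^i_t$ and add the containment $U\cap B(p_0,r)\subseteq H^i_{r+|b_{\gamma_i}(p_0)|}$ --- this works, but you then need properness of $b_{\gamma_i}$ (via Proposition~\ref{prp:regular}) just to know that $b_{\gamma_i}^{-1}((-\infty,0))$ is a neighborhood of $\mathcal{E}$, a dependence the paper's version does not incur. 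For the converse the paper argues directly: the coarea formula turns subquadratic area growth into $\liminf_{r\to\infty}\mhdf^1(U\cap\partial B(p,r))/r=0$, and Lemma~\ref{lem:l-B} (proved with \cite[2.10.11]{FEDERER}) bounds $l(\gamma(r))$ by $\mhdf^1(U\cap\partial B(\gamma(0),r))$. You instead take the contrapositive through the injectivity radius: no conjugate points gives $\operatorname{inj}(q_n)=\tfrac12 l(q_n)\geq\delta n/2$, the embedded disk $B(q_n,\delta n/2)$ lies in $U$ for large $n$, and a universal quadratic lower bound on its area forces quadratic growth of $A(U\cap B(p_0,r))$. This is sound, with one caveat you must fix: the bound $A_p(r)\geq\frac{8}{\pi}r^2$ for disks without conjugate points quoted in the introduction is explicitly an \emph{open conjecture}, so ``Croke's inequality for balls without conjugate points'' cannot be cited in that sharp form. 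What you actually need is only some universal $c>0$ with $A(B(q,\rho))\geq c\rho^2$ for $\rho\leq\operatorname{inj}(q)$, and such non-sharp bounds are theorems (Croke's isoembolic inequality for metric balls within the injectivity radius; a non-sharp disk bound also appears in \cite{CROKE09}), so the step survives with the correct reference. Two trade-offs are worth noting: your route replaces the paper's elementary, self-contained Lemma~\ref{lem:l-B} by an external quantitative input, and, because it leans on Klingenberg's lemma and the absence of geodesic loops in the universal cover, it does not recover the paper's remark that this direction holds without the no-conjugate-points hypothesis.
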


We note that the only-if part is true without the assumption of no conjugate points, compare the proof given below. We start with some preparations for the proof of the proposition. Recall that $l(p)$ denotes the length of a shortest non-contractible loop based at $p$, and that $\mhdf^1$ denotes one-dimensional Hausdorff measure on $C$.

\begin{lemma}\label{lem:l-B}
  If $\gamma\colon\mrealnneg\to C$ is a ray that converges to an end $\mathcal{E}$ of $C$, and if $U\subseteq C$ is a neighborhood of $\mathcal{E}$, then
  \[
    l(\gamma(r)) \leq\mhdf^1(U \cap \partial B(\gamma(0),r))
  \]
for every sufficiently large $r>0$.
\end{lemma}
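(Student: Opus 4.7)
The plan is to extract, from inside $U\cap\partial B(\gamma(0),r)$, a non-contractible Jordan curve passing through $\gamma(r)$; its length will then majorize $l(\gamma(r))$ while being majorized by $\mhdf^1(U\cap\partial B(\gamma(0),r))$. First I would reduce to the case where $U$ is a tubular neighborhood of $\mathcal{E}$, that is, a half-open subcylinder whose boundary $\partial U$ in $C$ is a smooth non-contractible simple closed curve; replacing $U$ by a smaller such neighborhood only decreases the right-hand side, so this is harmless. Since $\partial U$ is compact and $\gamma$ converges to $\mathcal{E}$, for all sufficiently large $r$ one has $\partial U\subseteq B(\gamma(0),r)$ and $\gamma(r)\in U$. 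We may of course also assume $\mhdf^1(U\cap\partial B(\gamma(0),r))<\infty$, since otherwise the claimed inequality is trivial.

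For such $r$ I would introduce the connected component $V_r$ of the open set $\{p\in U : d(p,\gamma(0))>r\}$ that contains $\gamma(r+\varepsilon)$ for all small $\varepsilon>0$. Since $\gamma$ converges to $\mathcal{E}$, the set $V_r$ is a neighborhood of $\mathcal{E}$ inside $U$, while $\partial U$ is disjoint from $\cl{V_r}$. The frontier of $V_r$ relative to $U$ is therefore a compact subset of $U\cap\partial B(\gamma(0),r)$ that separates $\partial U$ from $\mathcal{E}$ inside the open annulus $U$. By the elementary topology of separating compacta in an annulus, the connected component $K_r$ of this frontier that contains $\gamma(r)$ must itself be non-contractible in $C$.

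Since $K_r$ is a compact connected set of finite one-dimensional Hausdorff measure, I would invoke the classical structure theorem for such continua (they are topological graphs, up to a set of length zero): through the point $\gamma(r)\in K_r$ the continuum $K_r$ admits a simple closed curve $\Gamma_r\subseteq K_r$ representing the non-trivial homotopy class, of length at most $\mhdf^1(K_r)$. Then $\Gamma_r$ is a non-contractible loop based at $\gamma(r)$, so that
$l(\gamma(r))\leq\mathrm{length}(\Gamma_r)\leq\mhdf^1(K_r)\leq\mhdf^1(U\cap\partial B(\gamma(0),r))$, which is the claim.

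The main obstacle is to justify, first, that the component $K_r$ through $\gamma(r)$ --- rather than some other component of the frontier of $V_r$ --- is the one carrying the non-trivial homotopy class, and, second, that a non-contractible Jordan loop can be located inside $K_r$ through the prescribed base point $\gamma(r)$. The first point follows from a Janiszewski-type separation argument in the annulus $U$, exploiting that $V_r$ is connected and that $\gamma(r)$ is accessible from $V_r$ through the arc $\gamma|_{(r,r+\varepsilon)}$; any homotopy of $\partial U$ to a far loop deep inside $V_r$ must pass through the component of the frontier accessible from this arc, namely $K_r$. The second point is then a consequence of the structure theorem for rectifiable continua of finite length, combined with the fact that any continuum separating the two ends of an annulus and containing a prescribed point must already contain a non-contractible Jordan loop through that point.
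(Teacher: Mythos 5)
Your overall strategy -- locate inside $U\cap\partial B(\gamma(0),r)$ a compact connected separating set $K_r$ containing $\gamma(r)$, and then extract from it a non-contractible loop at $\gamma(r)$ of controlled length -- is reasonable up to the last step, but that last step contains a genuine gap. The topological fact you invoke, namely that \emph{any} continuum separating the two ends of an annulus must contain a non-contractible Jordan curve \emph{through a prescribed point} of it, is false: take $K=(S^1\times\{0\})\cup(\{x_0\}\times[0,1])$ in $S^1\times\mathreal$ and prescribe the tip $(x_0,1)$ of the hair; the only non-contractible Jordan curve in $K$ misses that point, and any non-contractible loop in $K$ based there has length up to $2\,\mhdf^1(K)$ rather than $\mhdf^1(K)$. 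Since $\mathrm{fr}(V_r)$ is a priori just a closed subset of the level set $\{d(\cdot,\gamma(0))=r\}$, which need not be a nice curve near $\gamma(r)$ (that point may well be a cut point of $\gamma(0)$ along some other minimizer), you cannot exclude that $\gamma(r)$ sits at such a ``hair tip'' of $K_r$. The one piece of structure that could save the argument -- that $\gamma(r)$ is accessible from \emph{both} complementary domains, via $\gamma(r\pm\varepsilon)$ -- is mentioned only in passing and is never converted into the needed statement; doing so would require first showing that $K_r$ is a Peano continuum (using $\mhdf^1(K_r)<\infty$) and then a nontrivial Whyburn-type theorem about simple closed curves in the common boundary of two complementary domains. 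None of this is supplied, so the proof is incomplete exactly where the factor between $l(\gamma(r))$ and $\mhdf^1$ is decided.

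For comparison, the paper's proof avoids continuum theory entirely. It takes $E$ to be the component of $U\setminus\cl{B(\gamma(0),r)}$ containing the tail of $\gamma$ (essentially your $V_r$), so that $\partial E\subseteq U\cap\partial B(\gamma(0),r)$, and then bounds $\mhdf^1(\partial E)$ from below by the Eilenberg/coarea inequality \cite[2.10.11]{FEDERER} applied to the distance function from $\gamma(r)$: for every $0<t<\tfrac12 l(\gamma(r))$ the set $\partial B(\gamma(r),t)$ is a genuine topological circle (here the absence of conjugate points enters, via the identity between $\tfrac12 l$ and the injectivity radius), it meets $E$ at $\gamma(r+t)$ and $C\setminus\cl{E}$ at $\gamma(r-t)$, hence meets $\partial E$ in at least two points; integrating the multiplicity over $t\in(0,\tfrac12 l(\gamma(r)))$ gives $\mhdf^1(\partial E)\ge l(\gamma(r))$ directly, with no need to find any curve through $\gamma(r)$ at all. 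You may want to note that your weaker conclusion -- a non-contractible Jordan curve somewhere in $K_r$, hence a point $q_r$ at distance $r$ from $\gamma(0)$ with $l(q_r)\le\mhdf^1(U\cap\partial B(\gamma(0),r))$ -- would still suffice for the application in Proposition~\ref{prp:opening}, but it does not prove the lemma as stated.
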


The idea of the proof of the lemma is that, for sufficiently large $r$, the component of $\partial B(\gamma(0),r)$ that contains $\gamma(r)$ is a non-contractible loop. This is true, but it is technically simpler to base the proof on some simple point set topology. The proof is similar to the proof of \cite[Lemma~6.2]{BE11}.

\begin{proof}[Proof of Lemma~\ref{lem:l-B}]
  Choose $r_0>\frac{1}{2} l(\gamma(0))$ such that the component of $C\setminus \cl{B(\gamma(0),r_0)}$ that is determined by $\mathcal{E}$ is contained in $U$. Let $r>r_0$. The minimality of $\gamma$ implies that for every $s>r$ we have $\gamma(s)\in C\setminus \cl{B(\gamma(0),r)}$. Since $\gamma$ converges to $\mathcal{E}$, we even have
\[
    \gamma(s)\in U\setminus \cl{B(\gamma(0),r)}
\]
for every $s>r$. We denote by $E$ the component of $U\setminus \cl{B(\gamma(0),r)}$ containing $\gamma((r,\infty))$. Note that $\cl{E}\subseteq U$, and that $\partial E$ is a closed subset of $U\cap \partial B(\gamma(0),r)$. So our claim is proved once we have proved that
\[
  l(\gamma(r))\leq\mhdf^1 (\partial E).
\]

We prove this by showing that for every $0<t<\frac{1}{2} l(\gamma(r))$ the metric circle $\partial B (\gamma(r),t)$ intersects $\partial E$ in at least two points. Then we can apply \cite[2.10.11]{FEDERER} to the distance function from $\gamma(r)$ and to the set $\partial E$, and obtain $2\cdot\frac{1}{2} l(\gamma(r))\leq\mhdf^1(\partial E)$. To prove that $\partial B(\gamma(r),t)\cap \partial E$ contains at least two points, we note that the minimality of $\gamma$ and the definition of $E$ imply that
\[
 \gamma (r+t)\in\partial B(\gamma(r),t)\cap E \quad \text{and} \quad \gamma (r-t)\in\partial B(\gamma(r),t)\cap (C\setminus \cl{E}).
\]
Since $t<\frac{1}{2} l(\gamma(r))$  and $\frac{1}{2} l(\gamma(r))$ is the injecticity radius of $C$ at $\gamma(r)$, we know that $\partial B(\gamma(r),t)$ is homeomorphic to a circle. Since $E$ is open and $\partial B(\gamma(r),t)\cap E\neq \emptyset$ and $\partial B(\gamma(r),t)\cap (C\setminus \cl{E})\neq\emptyset$, we see that $\partial B(\gamma(r),t)\cap \partial E$ contains at least two points.
\end{proof}

\begin{proof}[Proof of Proposition \ref{prp:opening}]
   We first prove that if an end $\mathcal{E}$ of $C$ has subquadratic area growth then it opens less than linearly.

Let $p\in C$ and let $U\subseteq C$ be Borel measurable. We apply the coarea formula \cite[3.2.11]{FEDERER} to the distance function from $p$, and obtain
\begin{align*}
  A(U\cap B(p,2r)) &\geq \int_r^{2r} \mhdf^1 (U\cap \partial B(p,t))\, dt
\intertext{for every $r> 0$. This implies that}
  \frac{A(U\cap B(p,2r))}{r^2} &\geq  \inf_{t\geq r} \frac{\mhdf^1 (U\cap \partial B(p,t))}{t}
\end{align*}
for every $r> 0$. By our assumption on $\mathcal{E}$, we can choose a neighborhood $U$ of $\mathcal{E}$ such that the limit inferior $r\to\infty$ of the left hand side is zero, and hence
\[
  \liminf_{r\to\infty} \frac{\mhdf^1 (U\cap \partial B(p,r))}{r}=0.
\]
Choose a ray $\gamma\colon\mrealnneg\to C$ that satisfies $\gamma(0)=p$ and that converges to $\mathcal{E}$. Now Lemma~\ref{lem:l-B} and the preceding equation imply that
\[
  \liminf_{r\to\infty} \frac{l(\gamma(r))}{r}=0.
\]
Using the minimality of $\gamma$ we see that $\mathcal{E}$ opens less than linearly.

We next prove the reverse implication, i.e.\ if an end $\mathcal{E}$ of $C$ opens less than linearly then $\mathcal{E}$ has subquadratic area growth. The proof is the same as the proof of Lemma~\ref{lem:subqH}, up to the following modification.

Choose a ray $\gamma_i$ that converges to $\mathcal{E}$. Let the sequences $(p_j)$, $(s_j)$ and $(r_j)$, and the loops $\Gamma_j$, $j\geq 1$, be as in the proof of Lemma~\ref{lem:subqH}. Let $\Gamma_0$ be a shortest non-contractible loop based at $p_0$, and let $U$ be the component of $C\setminus \Gamma_0$ that is a neighborhood of $\mathcal{E}$. By the triangle inequality we have
\[
\Gamma_j\subseteq U\setminus B(p_0,r_j)
\]
for all $j$ such that $r_j>0$. Hence for such $j$ the set $U\cap B(p_0,r_j)$ is contained in the compact subcylinder that is bounded by $\Gamma_0$ and $\Gamma_j$. Now we can apply \cite[Proposition~7.2]{BE11}, as in the proof of Lemma~\ref{lem:subqH}, to obtain $\lim_{j\to\infty}A(U\cap B(p_0,r_j))/(r_j)^2=0$ for a sequence $r_j\to\infty$. This shows that $\mathcal{E}$ has subquadratic area growth.
\end{proof}

\providecommand{\bysame}{\leavevmode\hbox to3em{\hrulefill}\thinspace}
\renewcommand{\MR}[1]{%
  \href{http://www.ams.org/mathscinet-getitem?mr=#1}{MR~#1}
}

\vspace{0.5em}
\small
{\scshape\noindent{}Mathematisches Institut\\
Universit\"at Freiburg\\
Eckerstr.~1\\
79104 Freiburg\\
Germany\\[0.25em]
\email{\textit{E-mail address}: {\normalfont victor.bangert@math.uni-freiburg.de}}}
\\[0.75em]
{\scshape\noindent{}Mathematisches Institut\\
Universit\"at Freiburg\\
Eckerstr.~1\\
79104 Freiburg\\
Germany\\[0.25em]
\email{\textit{E-mail address}: {\normalfont patrick.emmerich@math.uni-freiburg.de}}}

\end{document}